\newcommand{\T}{\mathsf{T}}
\DeclareMathOperator*{\Tr}{\mathsf{Tr}}
\journal{}
\begin{document}

\begin{frontmatter}



\title{Score-based Transport Modeling for Mean-Field Fokker-Planck Equations}

\author{Jianfeng LU \fnref{D}}
\ead{jianfeng@math.duke.edu}

\fntext[D]{Departments of Mathematics, Physics, and Chemistry, Duke Univeristy, Durham, USA}

\author{Yue WU \fnref{H}}
\ead{ywudb@connect.ust.hk}
\fntext[H]{Department of Mathematics, The Hong Kong University of Science and Technology, Hong Kong, China}

\author{Yang XIANG \fnref{H,S}}
\ead{maxiang@ust.hk}
\fntext[S]{HKUST Shenzhen-Hong Kong Collaborative Innovation Research Institute, Futian, Shenzhen, China}


\begin{abstract}
We use the score-based transport modeling method
to solve the mean-field Fokker-Planck equations, 
which we call MSBTM.
We establish an upper bound on the time derivative of the Kullback-Leibler (KL) divergence to MSBTM numerical estimation from the exact solution, thus validates the MSBTM approach. 
Besides, we provide an error analysis for the algorithm.
In numerical experiments, we study two types of mean-field Fokker-Planck equation and their corresponding dynamics of particles in interacting systems.
The MSBTM algorithm is numerically validated through qualitative and quantitative comparison between the MSBTM solutions, the results of integrating the associated stochastic differential equation and the analytical solutions if available.
\end{abstract}



\begin{keyword}
Scored-based modeling  \sep Fokker-Planck equation \sep Mean field interaction


\end{keyword}

\end{frontmatter}


\section{Introduction}
\label{sec:introduction}
The Fokker-Planck equation \citep{risken1996fokker} describes the time evolution of the probability density function of the observables such as particle systems in physics and statistical mechanics \citep{chandler1987introduction, spohn2012large}.
In this paper, we are interested in the mean-field Fokker-Planck equation \citep{Huang2020, Guillin2021, Bresch2022} 
of the time-dependent probability density function $\rho_t(x): \Omega \to \mathbb{R},$
\begin{equation}
\label{eq:FPE}
\tag{MFPE}
	\begin{aligned}
		\frac{\partial}{\partial t} \rho_t(x) &= -\nabla \cdot (b_t(x, \rho_t) \rho_t(x) -D_t(x) \nabla \rho_t(x)), \\
		b_t(x, \rho_t) &= f_t(x) - \int_{\Omega} K(x,y) \rho_t(y) dy,
		\quad x \in \Omega \subseteq \mathbb{R}^{d}.
	\end{aligned}
\end{equation}
For clarity, the subscript $t$ of a function indicates the time dependence of the function.
Here the drift vector $b_t(x, \rho_t)   \in \mathbb{R}^{d}$ depends on the density function $\rho_t(x)$ through the mean field interaction term $ \int_{\Omega} K(x,y) \rho_t(y) dy$
with the interaction function  
$K(x,y):\mathbb{R}^{d} \times \mathbb{R}^{d} \to \mathbb{R}^{d}$,
for example $K(x,y) = x-y$.
The diffusion matrix $D_t(x)  \in \mathbb{R}^{d \times d}$ is positive definite.
%
%
Consider a system of $N$ particles in $\mathbb{R}^{d}$ moving according to the stochastic dynamic defined by
\begin{equation}\label{eq:NP sde}
	dx^{(i)}_t = \bigl( f_t(x^{(i)}_t)  - \frac{1}{N} \sum_{1\leq j \leq N} K(x^{(i)}_t, x^{(j)}_t) \bigr) dt
	+\nabla \cdot D_t (x^{(i)}_t) dt + \sqrt{2} \sigma_t(x^{(i)}_t) dW^{i}_t,
\end{equation}
for $i=1,\dots N,$
and we have $x_t^{(i)}$
each particle position at time $t\geq 0$ ,
$D_t(x) = \sigma_t(x) \sigma_t(x)^\T, \sigma_t(x) \in \mathbb{R}^{d\times d}$, and
$\left\{ W^{i}_t \right\}_{ t\geq 0 }^{i=1,\dots,N} $ are independent standard Brownian motions in $\mathbb{R}^{d}.$
When the number of particles $N$ goes to infinity, from the theory of the propagation of chaos,
each particle at time $t$ asymptotically admits the identical probability density function $\rho_t(x),$
the pairwise interaction $\frac{1}{N} \sum_{1\leq j \leq N} K(x^{(i)}_t, x^{(j)}_t) $ becomes the interaction $\int_{\Omega} K(x^{(i)}_t,y) \rho_t(y) dy$ between any particle $x^{(i)}_t$ and its distribution,
and the density function $\rho_t(x)$ satisfies the mean-field Fokker-Planck equation~\eqref{eq:FPE}, see e.g., \citep{jabin2014review, jabin2017mean}.

Standard numerical schemes for solving partial differential equations, including the Fokker-Planck equation~\eqref{eq:FPE}, 
directly discretize the time and space, then construct and solve the discrete equations.
The computational complexity of standard numerical methods exponentially grows with the dimension $d$.
On the other hand, numerical integration of the associated stochastic differential equation (SDE)~\eqref{eq:NP sde} has no direct access to the value of the density and some other quantities that require the density value such as the entropy, the probability current.
To scale well in high dimensional problem and to obtain the density value at some position, a recent work \citep{Boffi} proposed the score-based transport modeling (SBTM) method to solve the Fokker-Planck equation.

In this paper, we extend SBTM to solving the Fokker-Planck equation with mean field interaction, 
which is a more general case compared to the problem in \citep{Boffi}.
We name the SBTM that solves the mean-field Fokker-Planck equations the mean field SBTM (MSBTM).
Furthermore,
the presence of mean field interaction brings the advantage of saving computational complexity.
Specifically,
when calculating the probability density function of $N$ particles $x^{(i)} \in \mathbb{R}^d, i =1, \dots,N$ in an interacting system, 
\citep{Boffi} reformulates the pairwise interaction as a function of concatenated variable $(x^{(1)}, \dots, x^{(N)}) \in \mathbb{R}^{Nd}$,
as result the Fokker-Planck equation becomes a problem in $\mathbb{R}^{Nd}.$
Moreover, another neural network is needed to impose the permutation symmetry of $N$ particles.
When $N$ is large, the dimensionality of the Fokker-Planck equation $Nd$ will be very high even for small $d$.
In contrast, from the mean field perspective, each particle obeys the identical distribution and 
automatically the particles are permutation symmetric,
and the associated mean-field Fokker-Planck equation remains in $\mathbb{R}^{d}$.
Therefore, if there are many particles in the interacting system ($N$ is sufficiently large), 
the MSBTM method avoids scaling up dimensions by the number of particles and thus significantly saves the computation.
The method of MSBTM to solve the mean-field Fokker-Planck equation is validated by numerical examples in section~\ref{sec:experiments}.

\FloatBarrier
\subsection*{Related Works}

\paragraph{Score-based diffusion modeling} 
Recent works \citep{yangsong2019, yangsong2021} in generative modeling use deep neural network $s (\cdot) $ 
to approximate the  score function $\nabla \log \rho $ of a target density $\rho$, and propagate the samples using SDE $dX_{\eta} = s(X_{\eta}) + dW_\eta $ based on the estimated score, as a result, the generated samples obey the target distribution. This approach is known as the score-based diffusion modeling. It has achieved state-of-the-art performance in generative modeling of images. 

\paragraph{Score-based transport modeling (SBTM)}
Inspired by score-based diffusion modeling, SBTM method \citep{Boffi} trains time-dependent deep neural networks to approximate the score $\nabla \log \rho_t$ 
and solves the Fokker-Planck equation defined by
\begin{equation}\label{eq:FPEsbtm}
    \frac{\partial}{\partial t} \rho_t(x) = -\nabla \cdot(b_t(x) \rho_t(x)- D_t(x) \nabla \rho_t(x)),
    \quad x \in \mathbb{R}^{d}.
\end{equation}
To be specific,  SBTM method reformulates the Fokker-Planck equation~\eqref{eq:FPEsbtm} as a transport equation 
\begin{equation}\label{eq:TEsbtm}
    \frac{\partial}{\partial t} \rho_t(x) = - \nabla \cdot(v_t(x) \rho_t(x)),
\end{equation}
 with the velocity field $v_t(x) = b_t(x)- D_t(x) \nabla \log \rho_t(x).$
Given the velocity field, the probability flow equation is defined by
\begin{equation}
	\label{eq:probflowsbtm}
   \frac{d}{dt} X_{\tau,t}(x) = v_t(X_{\tau,t}(x)), \qquad X_{\tau,\tau}(x) = x, \quad t,\tau \ge 0.
\end{equation}
The transport map $X_{\tau, t}(x): \mathbb{R}^{d} \to \mathbb{R}^{d}$ represents the position at time $t$ when occupying position $x$ at time $\tau$ and propagating according to the equation~\eqref{eq:probflowsbtm}.
In particular, we have $X_{t',t}( X_{t,t'}(x) ) = x $ and $X_{t',\tau}(X_{t,t'}(x) ) =X_{t,\tau}(x).$
In equation~\eqref{eq:probflowsbtm}, let $\tau=0$ and $x$ be a sample from $\rho_0$, then $X_{0,t}(x)$ will be a sample from $\rho_t$ that solves equation~\eqref{eq:TEsbtm}, or equivalently the Fokker-Planck equation~\eqref{eq:FPEsbtm}.
It can be shown that
\begin{equation}
	\label{eq:rhot:rho0 pre}
	\rho_t(X_{0,t}(x)) = \rho_0(X_{0,0}(x)) \exp\left( - \int_0^t \nabla \cdot v_{t'} (X_{0,t'}(x)) dt' \right).
\end{equation}
To evaluate the value of $\rho_t$ at $x \in \Omega,$
substitute  $x=X_{t,0}(x)$ into equation~\eqref{eq:rhot:rho0 pre} and then
\begin{equation}
	\label{eq:rhot:rho0}
	\rho_t(x) = \rho_0(X_{t,0}(x)) \exp\left( - \int_0^t \nabla \cdot v_{t'} (X_{t,t'}(x)) dt' \right).
\end{equation}
Here $ X_{t,0}(x)$ is obtained by solving equation~\eqref{eq:probflowsbtm} backward from $X_{t,t}(x)=x$ given $x.$
We can regard $\rho_t $ as the pushforward of $\rho_0$ under the flow map $X_{0, t}(\cdot)$
that satisfies the ordinary differential equation~\eqref{eq:probflowsbtm}.
To summarize, SBTM trains the neural networks $s_t(x)$ to approximate the score 
$\nabla \log \rho_t(x)$ and then estimates the velocity $v_t(x)$ using $ b_t(x)- D_t(x) s_t(x)$, as a result SBTM can estimate pointwise density value $\rho_t(x).$
Compared to the score-based diffusion model that trains the neural networks on the data from the target distribution,
the neural networks in SBTM method are trained over samples generated by the probability flow equation~\eqref{eq:probflowsbtm}, without the knowledge of the solution density.

In this paper, we follow the transport modeling and study a more general case where the drift term $b_t(x, \rho_t)=f_t(x) - \int_\Omega K(x,y)\rho_t(y) dy $ involves a mean field interaction term and thus has dependence on the density.

\section{Methodology}
\label{sec:methodology}
In this section,
we introduce MSBTM that extends the score-based transport modeling to solving the mean-field Fokker-Planck equation as~\eqref{eq:FPE}:
\begin{equation}
\tag{MFPE}
	\begin{aligned}
		\frac{\partial}{\partial t} \rho_t(x) &= -\nabla \cdot (b_t(x, \rho_t) \rho_t(x) -D_t(x) \nabla \rho_t(x)), \\
		b_t(x, \rho_t) &= f_t(x) - \int_{\Omega} K(x,y) \rho_t(y) dy,
		\quad x \in \Omega \subseteq \mathbb{R}^{d}.
	\end{aligned}
\end{equation}

\subsection{Preliminaries and assumptions}
\label{sec:assumptions}
Let $s_t: \mathbb{R}^{d} \to \mathbb{R}^{d}$ denote the neural network approximating the score $\nabla\log \rho_t$ at time $t$.
We consider the transport equation defined by 
\begin{equation}
	\tag{TE}
	\label{eq:transport}
	\begin{aligned}
		\frac{\partial}{\partial t} \rho_t(x) &= - \nabla \cdot \left( v_t(x, \rho_t) \rho_t(x)\right) ,   \\
		v_t(x, \rho_t ) &= b_t(x, \rho_t) - D_t(x) s_t(x),  \\
		b_t(x, \rho_t) &= f_t(x) - \int_{\Omega} K(x,y) \rho_t(y) dy,
	\end{aligned}
\end{equation}
and denote its solution $\rho_t: \Omega \to \mathbb{R}_{>0} $.
Note that 
when the neural network $s_t$ exactly learns the score $\nabla \log \rho_t$ over the training samples, 
then the transport equation~\eqref{eq:transport} is equivalent to the mean-field Fokker-Planck equation~\eqref{eq:FPE}, 
and $\rho_t$ coincides with $\rho_t^{\star}$, the exact solution of \eqref{eq:FPE}.

Denote $\|\cdot \|$ the $L_2$ norm for a vector or the induced norm for a matrix.
%
We make a few assumptions: 

(A.1) 
We assume that the stochastic process~\eqref{eq:NP sde} associated with the Fokker-Planck equation~\eqref{eq:FPE} evolves over a domain $\Omega\subseteq \mathbb{R}^d$ at all times~$t\ge0$,
the vector-valued function $f_t: \Omega \times \mathcal{P}(\Omega) \to \mathbb{R}^d,$ 
and the diffusion matrix $D_t: \Omega \to \mathbb{R}^{d\times d}$ are twice-differentiable and bounded in both $x$ and $t$, 
the interaction function $K(x,y): \Omega \times \Omega \to \mathbb{R}^d$ is twice-differentiable and bounded,
so that the solution to the SDE~\eqref{eq:NP sde} is well-defined at all times $t\ge 0$.
The symmetric matrix $ D_t(x) = D_t^\T(x) $ is assumed
to be positive definite for each $(t, x)$.

(A.2)
We assume that the initial probability density function $\rho_0$ is twice-differentiable,
positive everywhere on $\Omega$, and such that 
\begin{equation*}
    E_0 = -\int_\Omega \rho_0(x) \log \rho_0(x) dx < \infty.
\end{equation*}
This guarantees that $\rho^{\star}_t$ enjoys the same properties at all times $t>0$.
In particular we assume $\rho_t$ and $\nabla \log \rho_t$ are differentiable on $\Omega$.  

(A.3) Finally, we assume that 
$\log \rho^{\star}_t$ is $L $-smooth globally in $(t,x)\in [0,\infty)\times \Omega$, i.e. there exists a constant $L > 0$ satisfying
\begin{equation}
	\label{eq:Ksmooth}
	\quad \forall (t,x)\in [0,\infty)\times \Omega , \quad 
 \| \nabla \log \rho^{\star}_t(x) - \nabla \log \rho^{\star}_t(y) \| \le L \|x-y \|,
\end{equation}
so that the solution of the probability flow equation exists and is unique.

\subsection{Score-based transport modeling for mean-field Fokker-Planck equations}
\label{sec:main results}

\smallskip 

\begin{restatable}[]{proposition}{sbtmseq}
\label{prop:msbtmseq}
Given the score approximation function $s_t(x)$, 
define $v_t(x, \rho_t)= b_t(x, \rho_t) -D_t(x)s_t(x)$
and let $X_{0,t}$ solve the equation 
	\begin{equation}
		\label{eq:X}
			\frac{d}{dt}X_{0,t}(x) = v_t(X_{0,t}(x), \rho_t(X_{0,t}(x))),
			\quad X_{0,0}(x)  = x.
	\end{equation}    	
Fix $t\ge 0$ and consider the optimization problem
	\begin{equation}
		\label{eq:sbtm3}
		\tag{MSBTM}
		\begin{aligned}
			\min_{s_t} \int_\Omega \left( \|s_t(X_{0,t}(x)) \|^2 +2 \nabla \cdot s_t(X_{0,t}(x)) \right) \rho_0(x) dx.
		\end{aligned}
	\end{equation}
Under the assumptions in section~\ref{sec:assumptions},
the minimizer~$\hat{s}_t$ of~\eqref{eq:sbtm3} satisfies
	 $\hat{s}_t(x) = \nabla \log \rho_t^{\star}(x)$  where $\rho^{\star}_t $ solves the mean-field Fokker-Planck equation~\eqref{eq:FPE},
then the solution $\rho_t$ of the transport equation associated to this minimizer $\hat{s}_t$
coincides with $\rho^{\star}_t$.
Moreover, we have 
 \begin{equation}
	 	\label{eq:entD}
	 	\frac{d}{dt} D_{\mathrm{KL}}(\rho_t \Vert \rho^{\star}_t) \le \frac12 
	 	\max \left\{ \hat{D}_t, \hat{C}_{K,t}\right\} 
	 	\int_\Omega \|s_t(X_{0,t}(x)) -  \nabla \log \rho_t(X_{0,t}(x)) \|^2 \rho_0(x)dx,
	 \end{equation}
where constants $\hat{D}_t, \hat{C}_{K,t}>0 $ are defined in Lemma~\ref{lemma:entropy}.
\end{restatable}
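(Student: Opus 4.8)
The plan is to handle the minimizer first and the entropy estimate second, relying throughout on the pushforward identity $\int_\Omega g(X_{0,t}(x))\rho_0(x)\,dx = \int_\Omega g(y)\rho_t(y)\,dy$, which holds because the flow \eqref{eq:X} transports $\rho_0$ to the solution $\rho_t$ of \eqref{eq:transport}. Applying this change of variables to \eqref{eq:sbtm3} turns it into the implicit score-matching functional $\int_\Omega(\|s_t(y)\|^2 + 2\,\nabla\cdot s_t(y))\rho_t(y)\,dy$. Integrating the divergence term by parts, the boundary contribution vanishing by the decay built into (A.1)--(A.2), I would rewrite this as $\int_\Omega\|s_t(y)-\nabla\log\rho_t(y)\|^2\rho_t(y)\,dy$ minus a term independent of $s_t$, so that for fixed $\rho_t$ the functional is minimized exactly at $s_t=\nabla\log\rho_t$. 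To finish the first two assertions I would invoke self-consistency: once $s_t=\nabla\log\rho_t$ the velocity collapses to $b_t-D_t\nabla\log\rho_t$ and \eqref{eq:transport} becomes \eqref{eq:FPE}, forcing $\rho_t=\rho_t^\star$ and hence $\hat s_t=\nabla\log\rho_t^\star$.

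For the estimate \eqref{eq:entD}, the plan is to differentiate $D_{\mathrm{KL}}(\rho_t\|\rho_t^\star)=\int_\Omega\rho_t\log(\rho_t/\rho_t^\star)\,dx$ directly. Using mass conservation $\int_\Omega\partial_t\rho_t\,dx=0$, substituting the continuity equations $\partial_t\rho_t=-\nabla\cdot(v_t\rho_t)$ and $\partial_t\rho_t^\star=-\nabla\cdot(v_t^\star\rho_t^\star)$ with $v_t^\star=b_t(\cdot,\rho_t^\star)-D_t\nabla\log\rho_t^\star$, integrating by parts, and using $\nabla(\rho_t/\rho_t^\star)=(\rho_t/\rho_t^\star)\,w_t$ for $w_t:=\nabla\log\rho_t-\nabla\log\rho_t^\star$, I expect to reach
\begin{equation*}
\frac{d}{dt}D_{\mathrm{KL}}(\rho_t\|\rho_t^\star)=\int_\Omega\rho_t\,(v_t-v_t^\star)\cdot w_t\,dx.
\end{equation*}
I would then split the velocity gap as $v_t-v_t^\star=-\int_\Omega K(\cdot,y)(\rho_t-\rho_t^\star)(y)\,dy-D_t e_t-D_t w_t$, where $e_t:=s_t-\nabla\log\rho_t$ is the score-approximation error. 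The resulting dissipation $-\int_\Omega\rho_t\,w_t^\T D_t w_t\,dx\le 0$ is the negative budget furnished by positive-definiteness of $D_t$ in (A.1), and the idea is to spend it, via $D_t$-weighted Cauchy--Schwarz and Young inequalities, to absorb the $w_t$ factor in the two remaining cross terms. The score-error cross term then contributes a multiple of $\int_\Omega\rho_t\|e_t\|^2\,dx$ governed by the operator-norm bound on $D_t$, which is recorded as $\hat D_t$.

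The step I expect to be most delicate is the mean-field interaction cross term $-\int_\Omega\rho_t\bigl(\int_\Omega K(\cdot,y)(\rho_t-\rho_t^\star)(y)\,dy\bigr)\cdot w_t\,dx$, which is genuinely new relative to \citep{Boffi} because it couples $\rho_t$ and $\rho_t^\star$ nonlocally rather than through a local drift. This is exactly the estimate I would isolate into Lemma~\ref{lemma:entropy}: using the boundedness of $K$ from (A.1) to control the inner integral and once more borrowing from the dissipation, the aim is to express this contribution through the same score-error functional, with the constant $\hat C_{K,t}$ capturing the dependence on $K$. Closing this term so that the right-hand side contains only the score-error integral, rather than a Gr\"onwall-type dependence on $D_{\mathrm{KL}}$ itself, is the crux I would defer to that lemma and the part requiring the most care. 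Allocating the single dissipation budget between the two cross terms and retaining the larger coefficient then yields the prefactor $\tfrac12\max\{\hat D_t,\hat C_{K,t}\}$, and I would finish by undoing the change of variables, $\int_\Omega\rho_t(y)\|e_t(y)\|^2\,dy=\int_\Omega\|s_t(X_{0,t}(x))-\nabla\log\rho_t(X_{0,t}(x))\|^2\rho_0(x)\,dx$, to recover \eqref{eq:entD}.
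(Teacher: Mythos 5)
Your treatment of the minimizer is essentially the paper's own argument and is fine: the pushforward identity, integration by parts on the $2\nabla\cdot s_t$ term, completion of the square, and the self-consistency step (once $s_t=\nabla\log\rho_t$, equation~\eqref{eq:transport} becomes~\eqref{eq:FPE}, hence $\rho_t=\rho_t^\star$ and $\hat s_t=\nabla\log\rho_t^\star$) all match. Likewise, your derivation of $\frac{d}{dt}D_{\mathrm{KL}}(\rho_t\Vert\rho_t^\star)=\int_\Omega\rho_t\,(v_t-v_t^\star)\cdot w_t\,dx$ (with your notation $w_t=\nabla\log\rho_t-\nabla\log\rho_t^\star$, $e_t=s_t-\nabla\log\rho_t$) and your handling of the two $D_t$ terms are equivalent to the paper's: the paper keeps $D_t(s_t^\star-s_t)$ together and uses the algebraic inequality $2(s_t^\star-s_t)\cdot w_t\le\|e_t\|^2-\|w_t\|^2$, which is exactly your dissipation-plus-Young step in different clothing.

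The genuine gap is the mean-field cross term, precisely where you flagged it, and the plan you sketch for it would fail. Boundedness of $K$ controls $\bigl\Vert\int_\Omega K(\cdot,y)(\rho_t-\rho_t^\star)(y)\,dy\bigr\Vert$ only by $\sup\Vert K\Vert\cdot\Vert\rho_t-\rho_t^\star\Vert_{L^1}$, and after Young's inequality the leftover term is proportional to $\Vert\rho_t-\rho_t^\star\Vert_{L^1}^2\lesssim D_{\mathrm{KL}}(\rho_t\Vert\rho_t^\star)$ by Pinsker --- exactly the Gr\"onwall-type dependence you say you must avoid. Neither the dissipation (quadratic in $w_t$) nor the score error $\int_\Omega\rho_t\|e_t\|^2\,dx$ controls this $L^1$ quantity: there is no pointwise relation between the density gap $\rho_t-\rho_t^\star$ and the score gap $w_t$, so ``borrowing from the dissipation'' cannot close it. Deferring the step to Lemma~\ref{lemma:entropy} does not repair this, because that lemma \emph{is} the resolution, and its mechanism is the idea missing from your proposal: integrate by parts in $y$ to write $\int_\Omega K(x,y)(\rho_t^\star-\rho_t)(y)\,dy=\int_\Omega\tilde K(x,y)\bigl(\rho_t^\star\nabla\log\rho_t^\star-\rho_t\nabla\log\rho_t\bigr)(y)\,dy$ with $\tilde K$ an antiderivative of $K$, and then invoke the finiteness of the ratio constant $C_t=\sup_y\Vert\tfrac{\rho_t^\star}{\rho_t}\nabla\log\rho_t^\star-\nabla\log\rho_t\Vert/\Vert w_t\Vert$ to dominate the integrand pointwise by $C_t\Vert w_t(y)\Vert$. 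This converts the nonlocal density difference into the score difference, yielding the bound $\tfrac12\hat C_{K,t}\int_\Omega\rho_t\|w_t\|^2\,dx$ with $\hat C_{K,t}=\sup|\tilde K|(1+C_t^2)$, and that $\|w_t\|^2$ contribution is then cancelled against the negative $\|w_t\|^2$ term produced by the $D_t$ part. Without this device --- in effect an extra regularity/closeness hypothesis ($C_t<\infty$) hidden inside the lemma --- your otherwise sound skeleton cannot be completed from the boundedness assumptions (A.1)--(A.3) alone.
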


This proposition shows that the minimizer of the optimization problem~\eqref{eq:sbtm3} is the score of the solution to the mean-field Fokker-Planck equation~\eqref{eq:FPE}.
In addition, the time derivative of the KL divergence to the transport equation solution $\rho_t $ from the target solution $\rho_t^{\star}$
is upper bounded by the error between the neural network $s_t$ and the score $\nabla \log \rho_t$, thus is
controlled by the objective function in \eqref{eq:sbtm3}.
Provided the inequality~\eqref{eq:entD}, if we use the same initial density $\rho_0= \rho_0^{\star}$, the total KL divergence can be controlled: 
\begin{equation}
\begin{aligned}
    & D_{\mathrm{KL}}(\rho_T \Vert \rho^{\star}_T)
     \le \frac12 \int_0^T 
		\max \left\{ \hat{D}_t, \hat{C}_{K,t}\right\} 
		\int_{\Omega} \|s_t(X_{0,t}(x)) - \nabla \log \rho_t(X_{0,t}(x)) \|^2 \rho_0(x) dx dt\\
    & \lessapprox  \frac12 \sum_{t_k} \Delta t
		\max \left\{ \hat{D}_{t_k}, \hat{C}_{K,t_k}\right\}  \int_{\Omega} \|s_t(X_{0, t_k}(x)) - \nabla \log \rho_t(X_{0,t_k}(x)) \|^2 \rho_0(x) dx.
\end{aligned}
\end{equation}
Thus sequentially minimizing the objective in \eqref{eq:sbtm3} can accurately approximate solution to the mean-field Fokker-Planck equation~\eqref{eq:FPE}.

The proof of Proposition~\ref{prop:msbtmseq} can be found in \ref{app:proof p1}.

\subsection{MSBTM algorithm}
\label{sec:algorithm}

Here we summarize Algorithm~\ref{alg:msbtm} of the MSBTM method that solves the mean-field Fokker-Planck equation~\eqref{eq:FPE} and provide an error analysis.
Given $N$ samples from the initial density $\rho_0,$
we train the neural network $s_{t}$ to minimize the objective in equation~\eqref{eq:sbtm3} over the samples and then propagate the samples according to equation~\eqref{eq:X} alternatively.
For the velocity field in equation~\eqref{eq:X},
we approximate the mean field interaction term by the empirical mean of $N$ samples $\left\{x^{(i)} \right\}_{i=1}^{N}\sim \rho_0,$
\begin{equation}
    \begin{aligned}
\int_\Omega K( x^{(i)}, y)\rho_t(y) dy  & =
    \int_\Omega K( X_{0,t}(x^{(i)}), X_{0,t}(y) ) \rho_0(y) dy  \\
    & \approx \frac{1}{N} \sum_{j=1}^N K( X_{0,t}(x^{(i)}),  X_{0,t}(x^{(j)})),
    \end{aligned}
\end{equation}
with $\left\{ X_{0,t}(x^{(i)}) \right\}_{i=1}^N \sim \rho_t.$ 
We denote $X_{t_0,t_k}(x^{(i)}) $ as $X_{t_k}^{(i)}$ for short in Algorithm~\ref{alg:msbtm}.

\begin{algorithm}[!hptb]
	\caption{Mean field score-based transport modeling.}
	\label{alg:msbtm}
	\begin{algorithmic}
		\STATE \textbf{Input}: An initial time $t_0 \geq 0$. A set of $N $ samples 
  $\{x^{(i)}\}_{i=1}^N$ from initial density $\rho_{t_0}$. A time step $\Delta t$ and the number of steps $N_{T}.$
		\STATE Initialize sample locations $X_{t_0}^{(i)} = x^{(i)}$ for $i = 1, \dots, N$.
		\FOR{$k=0, \dots, N_T $}
		\STATE Optimize: $s_{t_k} = \arg\min_{s} \frac{1}{N} \sum_{i=1}^N \left[ \|s(X_{t_k}^{(i)} ) \|^2 + 2 \nabla\cdot s(X_{t_k}^{(i)})\right]$. 
		\STATE Propagate the samples for $i = 1, \dots, N$: 
		$$ \quad X_{t_{k+1}}^{(i)} = X_{t_{k}}^{(i)}
         + \Delta t \left[  f_t(X_{t_k}^{(i)}) 
         - \frac1N \sum_{j=1}^N K(X_{t_k}^{(i)}, X_{t_k}^{(j)} )  
        -D_{t_{k}}(X_{t_k}^{(i)}) s_{t_{k}}(X_{t_k}^{(i)}) \right]. $$
		\STATE Set $t_{k+1} = t_{k} + \Delta t$.
		\ENDFOR
		\STATE \textbf{Output}: $N$ samples $\{X_{t_k}^{(i)} \}_{i=1}^N$ from $\rho_{t_k}$ and the score $\{s_{t_k} (X_{t_k}^{(i)}) \}_{i=1}^N$  for all $\{t_k\}_{k=0}^{N_T}$. 
	\end{algorithmic}
\end{algorithm}


\begin{restatable}[Error analysis]{proposition}{error}
\label{prop:error}
Let $N, N_T, \Delta t, t_0, \rho_{t_0}$ be defined as in Algorithm~\ref{alg:msbtm}
and denote $\epsilon:= \sup_{x \in \Omega , t_0 \leq t \leq t_0 + N_T \Delta t} \Vert s_t(x) - \nabla \log \rho^{\star}_t(x) \Vert $ as the approximation error of well-trained neural networks $s_t$,
with $\rho_t^{\star}$ the solution of equation~\eqref{eq:FPE}.
Denote $X_{t_0, t}^{N }(\cdot)$ the numerical transport mapping 
obtained from the MSBTM algorithm.
Let $X^{\star}_{t_0, t}(\cdot)$ solve the probability flow equation
\begin{equation}
    \frac{d}{dt} X^{\star}_{t_0,t}(x) = v_t^{\star} (X^{\star}_{t_0, t}(x), \rho^{\star}(X^{\star}_{t_0, t}(x))), \quad X_{t_0, t_0}^{\star}(x) =x,
\end{equation}
with $v_t^{\star} (x, \rho_t^{\star})=f_t (x) - \int_{\Omega} K(x,y) \rho^{\star}_t(y) dy - D_t (x) \nabla \log \rho_t^{\star} (x),$
corresponding to equation~\eqref{eq:FPE}.
Under the assumption~\ref{sec:assumptions},
we have, for any $t \in \left\{ t_0 + n\Delta t \right\}_{n=0}^{N_T},$
\begin{equation}\label{eq:error-X}
    \mathbb{E}_{x\sim \rho_{t_0}}\left(\Vert X^{\star}_{t_0, t}(x) -X_{t_0, t}^{N }(x) \Vert \right)=
    (t-t_0 ) 
    \left[ \mathcal{O}(\frac{1}{\sqrt{N} }) + \mathcal{O}(\epsilon) + \mathcal{O}(\Delta t) \right]  ,
\end{equation}
as $N \rightarrow +\infty, \epsilon, \Delta t \rightarrow 0$.
\end{restatable}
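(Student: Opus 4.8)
The plan is to control the expected flow error $e_n := \mathbb{E}_{x\sim\rho_{t_0}}\|X^\star_{t_0,t_n}(x) - X^N_{t_0,t_n}(x)\|$ by a discrete Gr\"onwall inequality. Since both maps share the same initial condition, $X^\star_{t_0,t_0}(x)=x=X^N_{t_0,t_0}(x)$, we have $e_0=0$, and it suffices to establish a one-step recursion of the form $e_{k+1}\le (1+C\Delta t)\,e_k + \Delta t\,[\mathcal{O}(1/\sqrt N)+\mathcal{O}(\epsilon)+\mathcal{O}(\Delta t)]$, where $C$ depends only on the Lipschitz constants supplied by assumptions (A.1)--(A.3). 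Iterating this over the $n=(t-t_0)/\Delta t$ steps and using $(1+C\Delta t)^n\le e^{C(t-t_0)}$ together with $\sum_k \Delta t = t-t_0$ yields the claimed estimate, the explicit prefactor $(t-t_0)$ coming from the accumulated steps and the exponential Gr\"onwall amplification being absorbed into the implicit constants over the bounded time horizon.

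To derive the recursion I would write the exact flow in integral form, $X^\star_{t_0,t_{k+1}}(x) = X^\star_{t_0,t_k}(x) + \int_{t_k}^{t_{k+1}} v^\star_s(X^\star_{t_0,s}(x))\,ds$, and subtract the forward-Euler update of the algorithm. The difference splits into three contributions. First, a \emph{time-discretization} term $\int_{t_k}^{t_{k+1}}[v^\star_s(X^\star_{t_0,s}) - v^\star_{t_k}(X^\star_{t_0,t_k})]\,ds$; by the $C^1$-regularity of $f_t,D_t,K$ and $\nabla\log\rho^\star_t$ in $(t,x)$ from (A.1),(A.3) and the fact that trajectories stay in a bounded region over the finite horizon, its integrand is $\mathcal{O}(\Delta t)$, giving a local error $\mathcal{O}(\Delta t^2)$. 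Second, a \emph{stability} term $\Delta t\,[v^\star_{t_k}(X^\star_{t_0,t_k}) - v^\star_{t_k}(X^N_{t_k})]$, bounded by $C\Delta t\,\|X^\star_{t_0,t_k}-X^N_{t_0,t_k}\|$ since $v^\star_{t_k}$ is Lipschitz (each of its three constituents is). Third, a \emph{model-error} term $\Delta t\,[v^\star_{t_k}(X^N_{t_k}) - v^N_{t_k}(X^N_{t_k})]$ evaluated at the numerical points, which isolates the score and interaction approximations.

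This third term splits again into a score error $\Delta t\,\|D_{t_k}(X^N_{t_k})\|\,\|\nabla\log\rho^\star_{t_k}(X^N_{t_k}) - s_{t_k}(X^N_{t_k})\| \le \Delta t\,\|D\|_\infty\,\epsilon$, directly controlled by the uniform training error $\epsilon$ and the boundedness of $D$, and an interaction error $\Delta t\,\|\int_\Omega K(X^N_{t_k},y)\rho^\star_{t_k}(y)\,dy - \tfrac1N\sum_j K(X^N_{t_k}, X^{(j)}_{t_k})\|$. This last piece is the crux: the empirical average is formed from the numerically evolved samples, so the naive law of large numbers does not apply directly. I would insert the intermediate empirical mean $\tfrac1N\sum_j K(X^N_{t_k}, X^\star_{t_0,t_k}(x^{(j)}))$ built from the exact-flow images of the same initial samples. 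The difference between the two empirical means is bounded by $\mathrm{Lip}(K)\cdot\tfrac1N\sum_j\|X^{(j)}_{t_k}-X^\star_{t_0,t_k}(x^{(j)})\|$, whose expectation equals $\mathrm{Lip}(K)\,e_k$ and thus folds into the Gr\"onwall constant $C$. The difference between the intermediate mean and the integral is a genuine Monte Carlo fluctuation: since the $x^{(j)}\sim\rho_{t_0}$ are i.i.d., their exact-flow images $X^\star_{t_0,t_k}(x^{(j)})$ are i.i.d. with law $\rho^\star_{t_k}$, and as $K$ is bounded (hence of finite variance) its expected norm is $\mathcal{O}(1/\sqrt N)$. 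Collecting the three sources produces the recursion above, and the discrete Gr\"onwall lemma closes the argument. I expect the main obstacle to be precisely this decoupling of the interaction term, namely separating the statistical $\mathcal{O}(1/\sqrt N)$ fluctuation from the flow-coupling error that must instead be absorbed into the Gr\"onwall iteration, rather than the routine consistency and stability estimates.
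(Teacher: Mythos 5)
Your overall strategy coincides with the paper's: you track $\mathcal{E}_{t_k}=\mathbb{E}_{x\sim\rho_{t_0}}\|X^{\star}_{t_0,t_k}(x)-X^{N}_{t_0,t_k}(x)\|$, derive a one-step recursion consisting of an $\mathcal{O}(\Delta t^2)$ consistency term (the paper gets it from a Taylor expansion rather than the integral form, which is the same thing), a Lipschitz stability term, a score error $\mathcal{O}(\epsilon)\Delta t$, and an interaction error split into a Monte Carlo fluctuation plus a coupling term, and you close with discrete Gr\"onwall and $\mathcal{E}_{t_0}=0$. This is exactly the structure of the proof in \ref{app:proof p2}.

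The one step that does not hold as written is your Monte Carlo bound. You insert the intermediate mean $\frac1N\sum_j K\bigl(X^N_{t_0,t_k}(x),X^{\star}_{t_0,t_k}(x^{(j)})\bigr)$, keeping the first argument at the \emph{numerical} point, and then argue that since the exact-flow images $X^{\star}_{t_0,t_k}(x^{(j)})$ are i.i.d.\ with law $\rho^{\star}_{t_k}$ and $K$ is bounded, the deviation from $\int_\Omega K(X^N_{t_0,t_k}(x),y)\rho^{\star}_{t_k}(y)\,dy$ has expected norm $\mathcal{O}(1/\sqrt N)$. But $X^N_{t_0,t_k}(x)$ is itself a function of \emph{all} the initial samples $x^{(1)},\dots,x^{(N)}$, because the numerical flow in Algorithm~\ref{alg:msbtm} is coupled through the empirical interaction. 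The kernel's evaluation point is therefore correlated with the very sample points being averaged, so the summands are not (conditionally) i.i.d., and a pointwise variance computation does not deliver the claimed rate; what is actually needed is a uniform law of large numbers over the family $\{K(z,\cdot):z\in\Omega\}$, i.e.\ an empirical-process bound, which for a bounded Lipschitz kernel on a parametric index set still gives $\mathcal{O}(1/\sqrt N)$ but is a genuinely stronger statement than the one you invoke. The paper sidesteps this by choosing the other intermediate term, $\frac1N\sum_j K\bigl(X^{\star}_{t_0,t_k}(x),X^{\star}_{t_0,t_k}(x^{(j)})\bigr)$, with \emph{both} arguments at exact-flow points: conditionally on $x$ (drawn independently of the $x^{(j)}$), the summands are then honestly i.i.d.\ with the correct mean, giving the $C_1/\sqrt N$ term, while the remaining discrepancy is handled by $\|K(X^{\star}(x),X^{\star}(x^{(j)}))-K(X^N(x),X^N(x^{(j)}))\|\le \mathrm{Lip}(K)\bigl(\|X^{\star}(x)-X^N(x)\|+\|X^{\star}(x^{(j)})-X^N(x^{(j)})\|\bigr)$, whose expectation folds into the Gr\"onwall constant exactly like your coupling term. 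Swapping your intermediate mean for this one repairs the argument with no other change.
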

\paragraph{Remark}
Here the errors come from three sources:
$\mathcal{O}(\frac{1}{\sqrt{N} })$
caused by Monte-Carlo method,
neural network approximation error 
$\mathcal{O}(\epsilon) $ and
discretization error $\mathcal{O}(\Delta t)$.
Note that the error $\epsilon$ can be small and does not accumulate with time since the time-dependent neural networks $s_t$ are trained sequentially in Algorithm~\ref{alg:msbtm}.
This error analysis indicates that we can take larger $N$ (more samples), smaller time step $\Delta t$ and smaller approximation error $\epsilon$ (neural network and training details) to achieve smaller numerical error.

The proof of Proposition~\ref{prop:error} is given in \ref{app:proof p2}.

\FloatBarrier
\section{Experiments}
\label{sec:experiments}

\subsection{Harmonically interacting particles in a harmonic trap}
\label{sec:ex1}

In the first example, we study the mean-field Fokker-Planck equation defined by
\begin{equation}
	\label{eq:harmonic_FPE}
	\frac{\partial}{\partial t} \rho_t(x) = -\nabla \cdot 
	\left[ 	\left( \beta_t - x  -  \alpha \int_{\Omega} (x -y) \rho_t(y) dy \right) \rho_t(x) -D \nabla \rho_t(x)
	\right],
\end{equation}
with $\rho_t(x): \mathbb{R}^{\bar{d}} \rightarrow \mathbb{R},  x \in \Omega\subseteq \mathbb{R}^{\bar{d}}$ and parameters $\alpha \in (0,1),  \beta_t \in \mathbb{R}^{\bar{d}}, D>0.$
If we consider a system of $N$ particles moving according to the stochastic dynamics
\begin{equation}
	\label{eq:harmonic_SDE1}
	dx^{(i)}_t = (\beta_t - x^{(i)}_t)dt  - 
	\frac{\alpha}{N} \sum_{j=1}^N 
	\left(x^{(i)}_t - x^{(j)}_t \right)dt
	+ \sqrt{2D}\,dW_t,\quad i=1, \dots, N,
\end{equation}
when $N$ goes to infinity, each particle becomes indistinguishable and asymptotically independent from the others, and its interaction becomes with its own distribution.
And each particle admits the density $\rho_t(x)$  that solves the equation~\eqref{eq:harmonic_FPE}.

Rewriting the dynamics~\eqref{eq:harmonic_SDE1} as follows
\begin{equation}
	\label{eq:harmonic_SDE}
	dx^{(i)}_t = (\beta_t - x^{(i)}_t)dt  - 
	\alpha \left( x^{(i)}_t - \frac{1}{N}\sum_{j=1}^N x^{(j)}_t \right)dt
	 + \sqrt{2D}\,dW_t,\quad i=1, \dots, N,
\end{equation}
which is similar to the example in \citep{Boffi}.
A physical interpretation of this system is that, $N$ particles repel each other according to a harmonic interaction with $\alpha$ setting the magnitude of the repulsion,
while experiencing harmonic attraction towards a moving trap $\beta_t$.
The dynamics~\eqref{eq:harmonic_SDE} is an Ornstein-Uhlenbeck process in the extended variable $x=(x^{(1)}, x^{(2)}, \dots, x^{(N)})^{\T} \in \mathbb{R}^{\bar{d}N}$ with block components $x^{(i)} \in \mathbb{R}^{\bar{d}}$. 
So this dynamics admits a tractable analytical solution in $\bar{d}N$-dimensional space:
assuming a Gaussian initial condition, the solution  $\rho_{\bar{d}N}: \mathbb{R}^{\bar{d}N} \rightarrow \mathbb{R}$ to the Fokker-Planck equation associated with~\eqref{eq:harmonic_SDE} is a  Gaussian for all time.
The joint distribution of concatenated variable $(x^{(1)}, x^{(2)}, \dots, x^{(N)})^{\T} $ is a Gaussian in $\mathbb{R}^{\bar{d}N},$ and converges to a product distribution as $N$ increases. 
Then the distribution of each component should also be a Gaussian.
Therefore, with a sufficiently large $N,$ the solution $x^{(i)}, i=1, \dots, N$ to~\eqref{eq:harmonic_SDE1}
approximately obey the identical Gaussian distribution
that solves the Fokker-Planck equation~\eqref{eq:harmonic_FPE}.
As a result, we can characterize the solution to~\eqref{eq:harmonic_FPE} by its mean $m_t \in \mathbb{R}^{\bar{d}}$ and covariance $C_t \in \mathbb{R}^{\bar{d} \times \bar{d}} $.

\paragraph{Setting}  In the experiment, we take $\bar{d} = 2, \beta_t = a(\cos\pi\omega t, \sin\pi\omega t)^\T$ with $a = 2$, $\omega = 1$, $D = 0.25$, $\alpha = 0.5$, and the number of samples $N=300$.
The particles are initialized from an isotropic Gaussian $\rho_0 (x) $ with mean $\beta_0$ (the initial trap position) and variance $\sigma_0^2 = 0.25$.
Different to the neural network representation in \citep{Boffi} that requires another one neural network to achieve symmetry in terms of particle permutation,
we directly parameterize the score $s_t:  \mathbb{R}^{2}\rightarrow \mathbb{R}^{2}$ using a two-hidden-layer neural network with $\mathtt{n\_hidden} = 32$ neurons per hidden layer, and $\texttt{swish}$ activation function~\citep{swish_act}.
For the initial minimization step in the algorithm,
initial $s_0(t)$ is trained to minimize the analytical relative loss
$$\frac{\int_{\Omega} \| s_0(x)-\nabla \log \rho_0 (x) \|^2 \rho_0(x) dx}{\int_{\Omega} \| \nabla \log \rho_0 (x) \|^2 \rho_0(x) dx}
\approx
\frac{\sum_{i=1}^{N} \| s_0(X_0(x^{(i)}))-\nabla \log \rho_0 (X_0(x^{(i)})) \|^2  }{\sum_{i=1}^{N} \| \nabla \log \rho_0 (X_{0}(x^{(i)})) \|^2 },$$
where $X_0(x^{(i)})=x^{(i)} $ are drawn from the initial Gaussian distribution.
The analytical relative loss is minimized to a tolerance less than $10^{-4}$. 
After the training of $s_0(x)$, we train the neural networks $s_t(x), t>0$ by minimizing the objective function
$ \frac{1}{N} \sum_{i=1}^N \left[\| s_{t}(X_{t}(x^{(i)})) \|^2 + 2 \nabla \cdot s_{t}(X_{t}(x^{(i)}))\right]$.
Here the divergence $\nabla \cdot s_{t}$ is approximated by the denoising score matching loss function \citep{vincent_2011, Boffi}
\begin{equation}\label{eq:denoisingL}
	\begin{aligned}
	    \nabla \cdot s_t(x)
     &=\lim_{\kappa \to 0} \frac{1}{2 \kappa}  \left[\mathbb{E}_{\xi}(s_t(x+ \kappa \xi) \cdot \xi) -\mathbb{E}_{\xi}(s_t(x-\kappa \xi) \cdot \xi) \right]  \\
     &\approx
	\frac{1}{2 \kappa} \left[\mathbb{E}_{\xi}(s_t(x+ \kappa \xi) \cdot \xi) -\mathbb{E}_{\xi}(s_t(x-\kappa \xi) \cdot \xi) \right],
	\end{aligned}
\end{equation}
with $\xi \sim N(0, I), $  and a noise scale $\kappa =1e-4.$
We initialize the parameters $\theta_{t +\Delta t} $ of the neural network $s_{t +\Delta t}$ to those of the trained neural network $s_{t}$.
The time step is $\Delta t=5e-4.$
We train the networks using Adam optimizer with a learning rate of $\eta = 10^{-4},$ 
until the norm of the gradient is below $gtol=0.3$ for the 1-2000 time step, $gtol=0.35$ for the 2000-9000 time step, and $gtol=0.4$ afterwards.

\paragraph{Comparison} 
We quantitatively compare the solution learnt by the MSBTM Algorithm~\ref{alg:msbtm} with the results of numerical SDE integration and the analytical results derived from Gaussian distribution,
using three metrics as in \citep{Boffi}:
the trace of the covariance matrix, relative Fisher divergence
and entropy production rate.
First, we compare the trace of empirical covariance from samples $\left\{X_{t}(x^{(i)}) \right\}_{i=1}^{N}$ obtained by MSBTM algorithm with  $X_{t}(x^{(i)}) \in \mathbb{R}^2$, and those of SDE integration,
the analytical covariance $C_t$ solved from the ODE~\eqref{eq:MC} at different time step $t$.
We also compute the empirical covariance of samples obtained from integration of the dynamics without noise term
\begin{equation}
	\label{eq:noisefree}
	dx^{(i)}_t = (\beta_t - x^{(i)}_t)dt  - 
	\frac{\alpha}{N} \sum_{j=1}^N 
	\left(x^{(i)}_t - x^{(j)}_t \right)dt,\quad i=1, \dots, N.
\end{equation}
We plot the trace along the time in figure~\ref{fig:Qt}.
Second,
we compute the relative Fisher divergence defined by 
\begin{equation}
	\label{eq:discrep}
	\frac{\int_\Omega \|s_t(x) - \nabla\log\rho_t(x) \|^2 \bar{\rho}(x) dx}{\int_\Omega  \| \nabla\log\rho_t(x) \|^2 \bar{\rho}(x)dx},
\end{equation} 
where the target $\nabla \log \rho_t(x) = -C_t^{-1}(x - m_t).$
Here $\bar{\rho}$ can be taken over the particles obtained from the Algorithm~\ref{alg:msbtm} (training data), or the samples calculated by SDE integration (SDE data).
Note that the computation of relative Fisher divergence requires
the prediction of the neural network $s_{t}(\cdot)$ on SDE data, which measures the generalization ability of $s_t$.
We compare the relative Fisher divergence of MSBTM and SDE integration along the time in figure~\ref{fig:Ql}.
Third, entropy production rate $\frac{dE_t}{dt}$ is the time derivative of entropy production 
$E_t = -\int_{\Omega}  \rho_t(x) \log\rho_t(x) dx $.
Numerically it can be computed over the training data:
\begin{equation}
	\frac{d}{dt}E_t = -\int_{\Omega} \nabla \log \rho_t(x)\cdot v_t(x) \rho_t(x)dx 
	\approx -\frac{1}{N} \sum_{i=1}^{N} s_t(X_t(x^{(i)})) \cdot v_t(X_t(x^{(i)}).
\end{equation}
Analytically,
the differential entropy $E_t$ is given by 
\begin{equation}
	\label{eq:St:harm}
	E_t  = \frac{1}{2} \bar d \left(\log\left(2\pi\right) + 1\right) + \frac{1}{2}\log\det C_t,
\end{equation} 
and then entropy production rate is
\begin{equation}
	\frac{d}{dt} E_t = \frac{1}{2}	\frac{d}{dt} \log\det C_t.
\end{equation}
Note that using SDE integration is not able to calculate entropy production rate.
We compare entropy production rate from MSBTM algorithm and theoretical one in figure~\ref{fig:Qe}.
More details are given in \ref{app:qc}.
In addition to the quantitative comparison,	we visualize the trajectory of one of particles along the time in figure~\ref{fig:Trajectory1} for better observation, and the trajectories of 300 particles along the time in figure~\ref{fig:Trajectory}.

\begin{figure}[!hbtp]
	\centering
	\begin{subfigure}[t]{0.49\textwidth}
		\centering
		\includegraphics[width=0.85\textwidth]{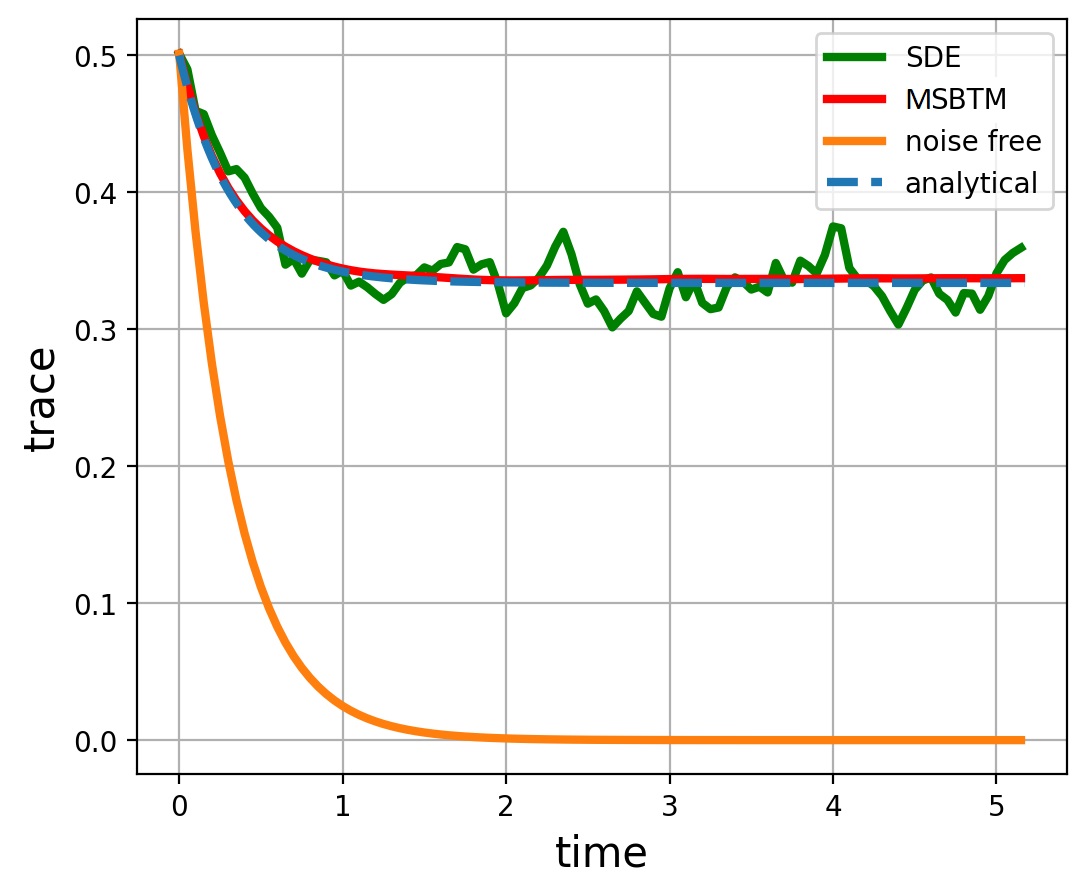}
		\caption{}
		\label{fig:Qt}
	\end{subfigure}
	\begin{subfigure}[t]{0.49\textwidth}
		\centering
		\includegraphics[width=0.85\textwidth]{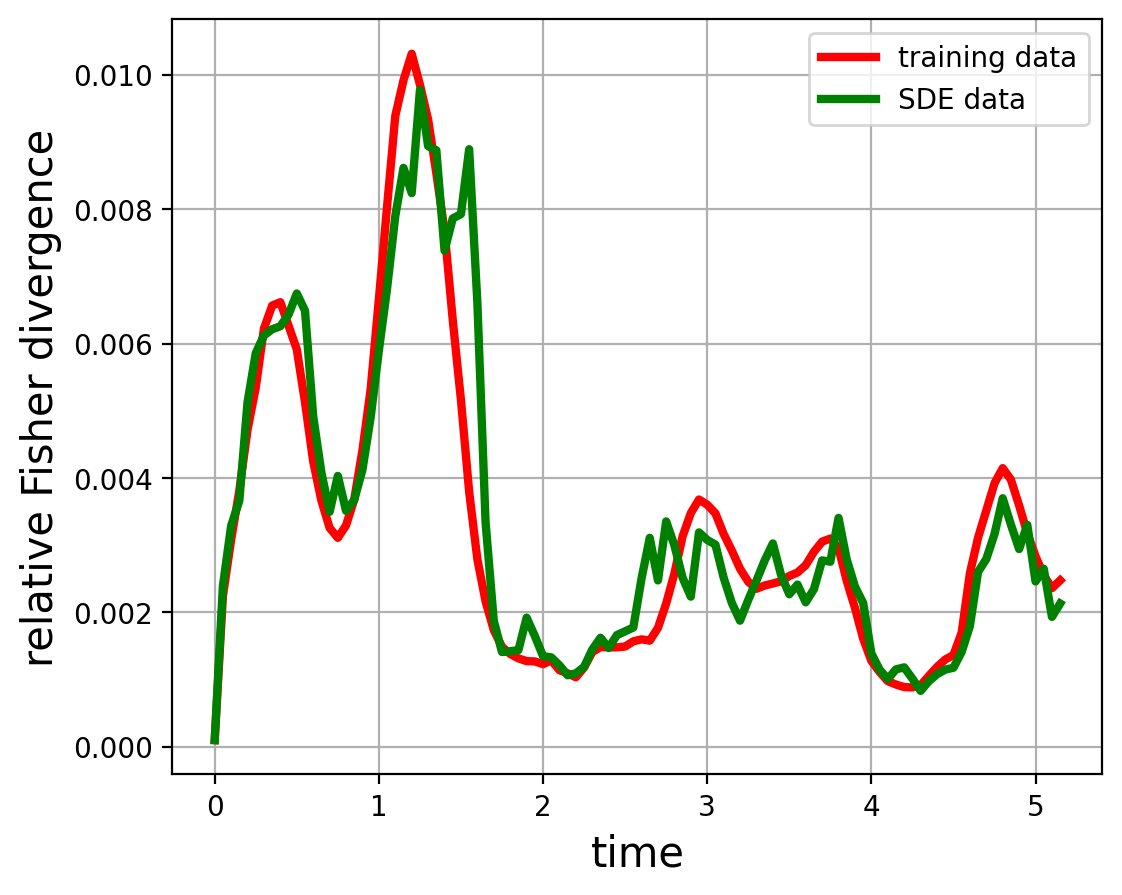}
		\caption{}
		\label{fig:Ql}
	\end{subfigure}
 
	\begin{subfigure}[t]{0.49\textwidth}
		\centering
		\includegraphics[width=0.85\textwidth]{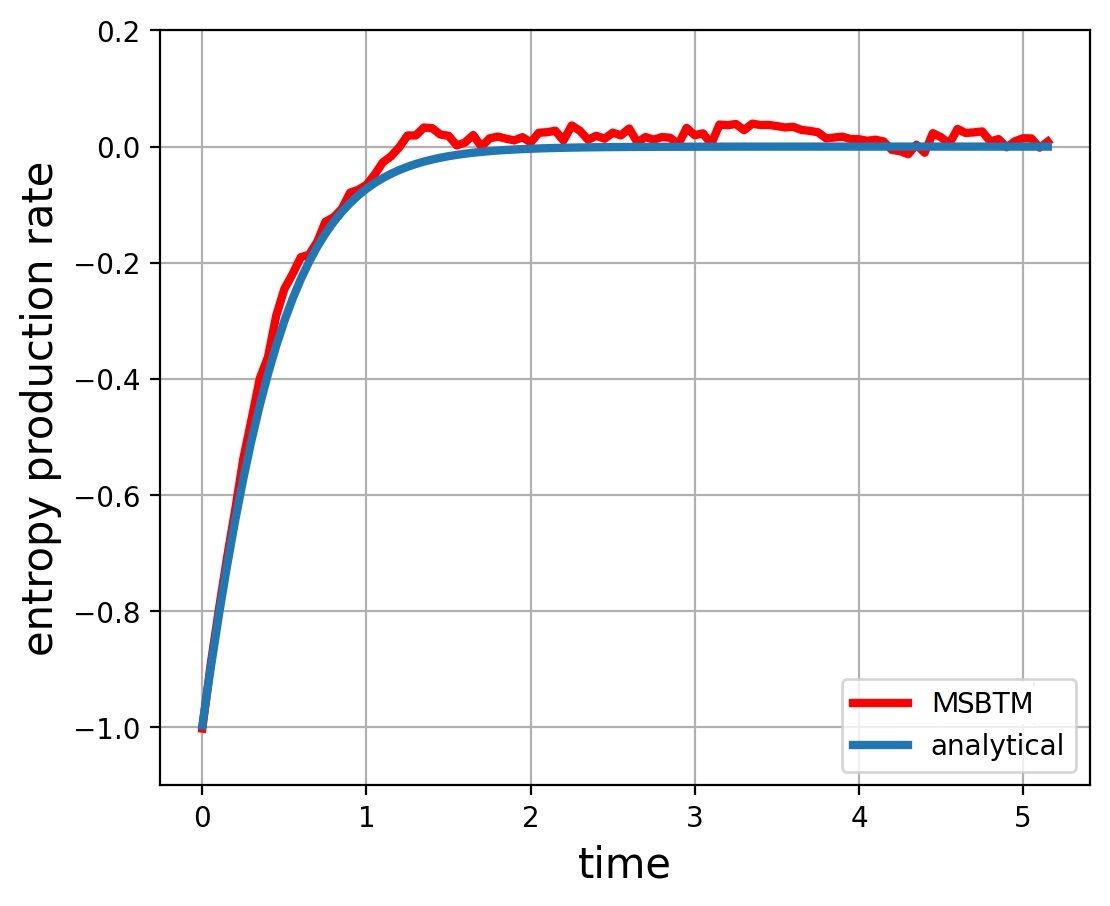}
		\caption{}
		\label{fig:Qe}
	\end{subfigure}	
     \caption{Quantitative comparison  between the numerical solution from MSBTM algorithm, SDE integration and the analytical solution. (a) Trace,  (b) relative Fisher divergence, (c) entropy production rate. }	
     \label{fig:Qthree}
\end{figure}

\begin{figure}[!hbtp]
	\begin{subfigure}[t]{0.32\textwidth}
		\centering
		\includegraphics[width=0.99\textwidth]{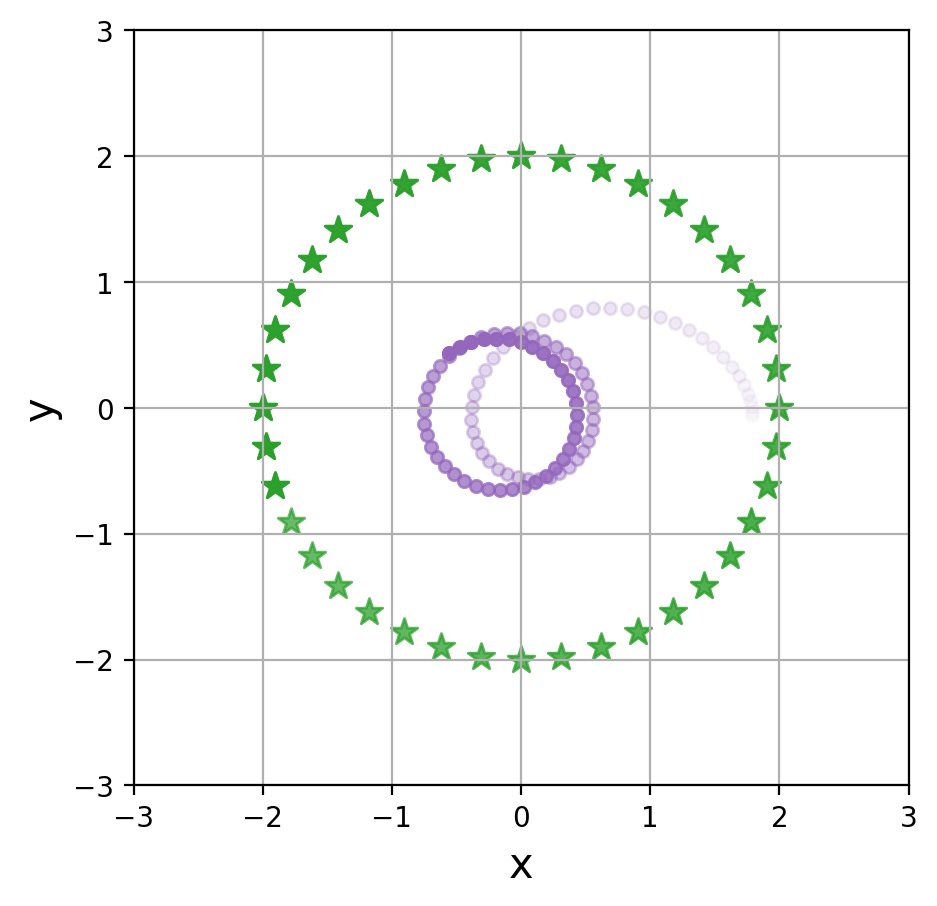}
		\caption{MSBTM}
		\label{fig:SBTM1}
	\end{subfigure}
	\begin{subfigure}[t]{0.32\textwidth}
		\centering
		\includegraphics[width=0.99\textwidth]{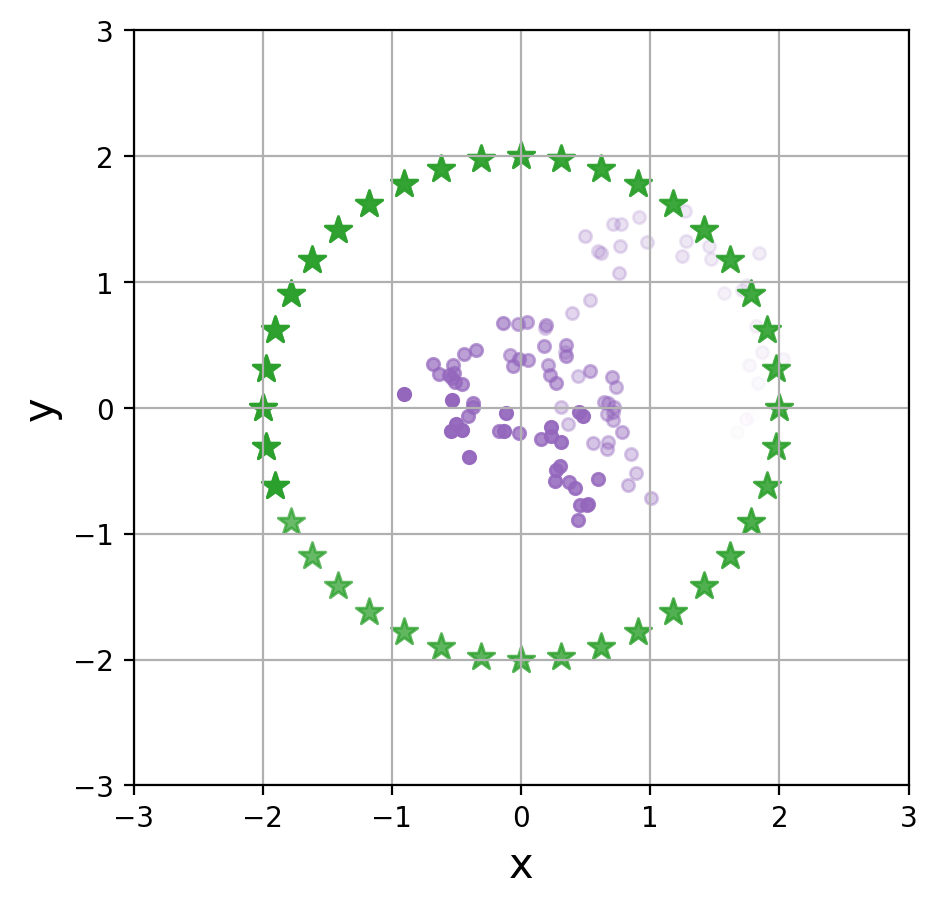}
		\caption{SDE}
		\label{fig:SDE1}
	\end{subfigure}
	\begin{subfigure}[t]{0.32\textwidth}
		\centering
		\includegraphics[width=0.99\textwidth]{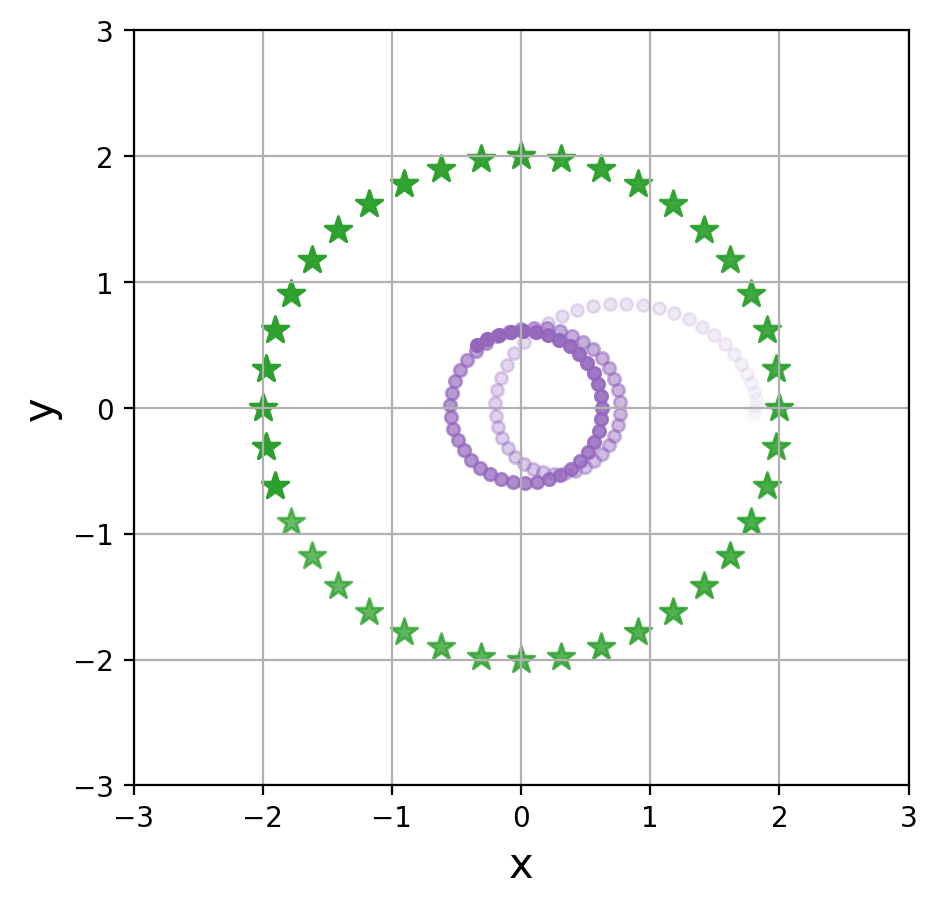}
		\caption{Noise free}
		\label{fig:NO1}
	\end{subfigure}	
	\caption{Trajectory visualization of certain one particle out of $N=300$ particles in a harmonic trap with a harmonic interaction~\eqref{eq:harmonic_SDE}. The mean of the trap $\beta_t$ is shown with a green star and the particles are shown with purple dots. A fading trajectory indicates past positions of the trap mean and the particles. }	
	\label{fig:Trajectory1}
\end{figure}

\paragraph{Results}
Results for quantitative comparison in term of trace, relative Fisher divergence are shown in figure~\ref{fig:Qthree}. 
In figure~\ref{fig:Qt}, the MSBTM algorithm accurately predicts the trace of samples in the system~\eqref{eq:harmonic_SDE1} as the analytical results.
However the SDE predicts the covariance with more fluctuations, and the noise-free system~\eqref{eq:noisefree} incorrectly estimates the covariance as converging to zero.
In figure~\ref{fig:Ql}, the relative Fisher convergence over training data and SDE data match, which demonstrates that the neural networks $s_t(\cdot)$ generalize well on the SDE samples.
In figure~\ref{fig:Qe}, the MSBTM algorithm approximates the theoretical entropy production rate 
while the SDE cannot make a prediction for entropy production rate.
Furthermore, from the trajectories in figure~\ref{fig:Trajectory1} and figure~\ref{fig:Trajectory},
the MSBTM algorithm captures the variance of the stochastic dynamics~\eqref{eq:harmonic_SDE1} as SDE integration, and generates deterministic trajectories similar to that of the noise free system~\eqref{eq:noisefree}. The particles are trapped in the circle determined by $\beta_t, t \geq 0.$

\begin{figure}[!hbtp]
	\begin{subfigure}[t]{0.32\textwidth}
		\centering
		MSBTM
		\includegraphics[width=0.99\textwidth]{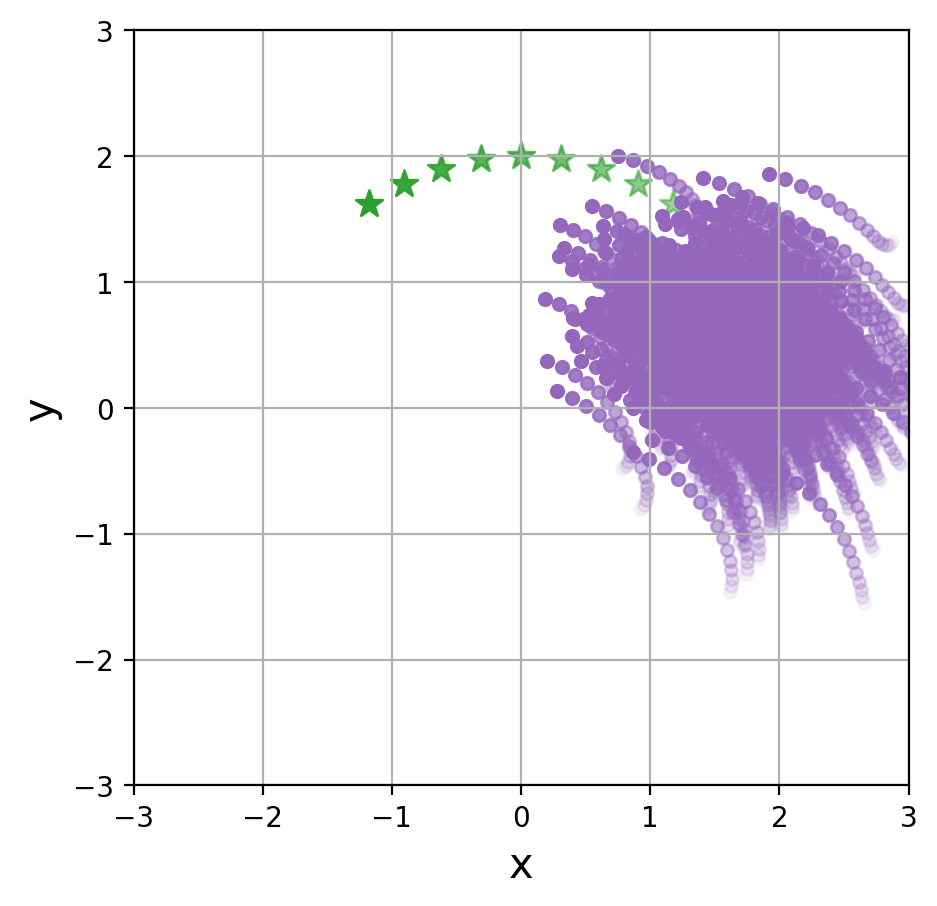}
	\end{subfigure}
	\begin{subfigure}[t]{0.32\textwidth}
		\centering
		SDE
		\includegraphics[width=0.99\textwidth]{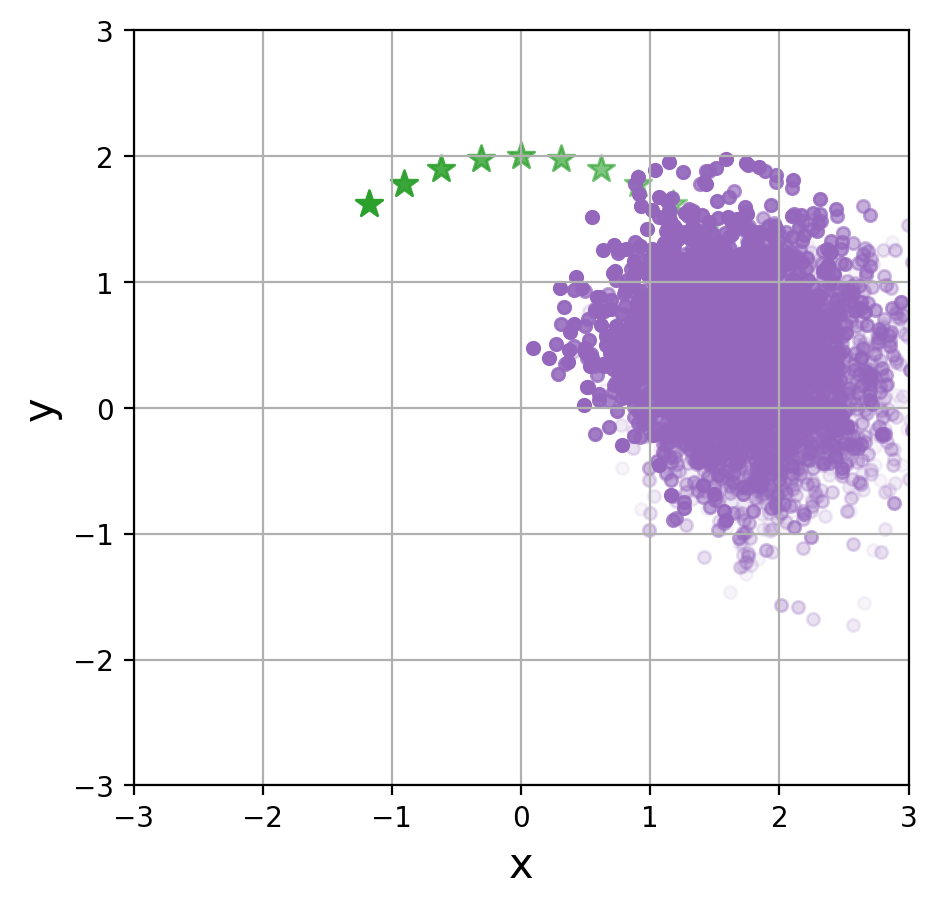}
	\end{subfigure}
	\begin{subfigure}[t]{0.32\textwidth}
		\centering
		Noise free
		\includegraphics[width=0.99\textwidth]{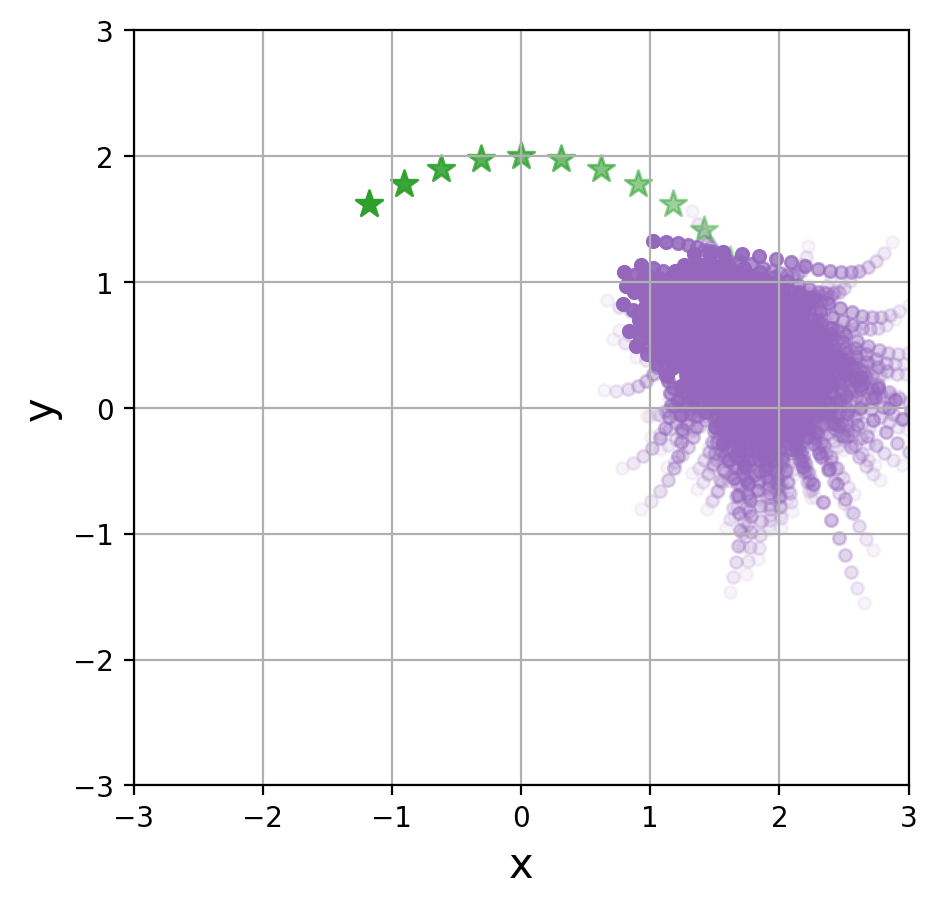}
	\end{subfigure}	

   \begin{subfigure}[t]{0.32\textwidth}
   	\centering
   	\includegraphics[width=0.99\textwidth]{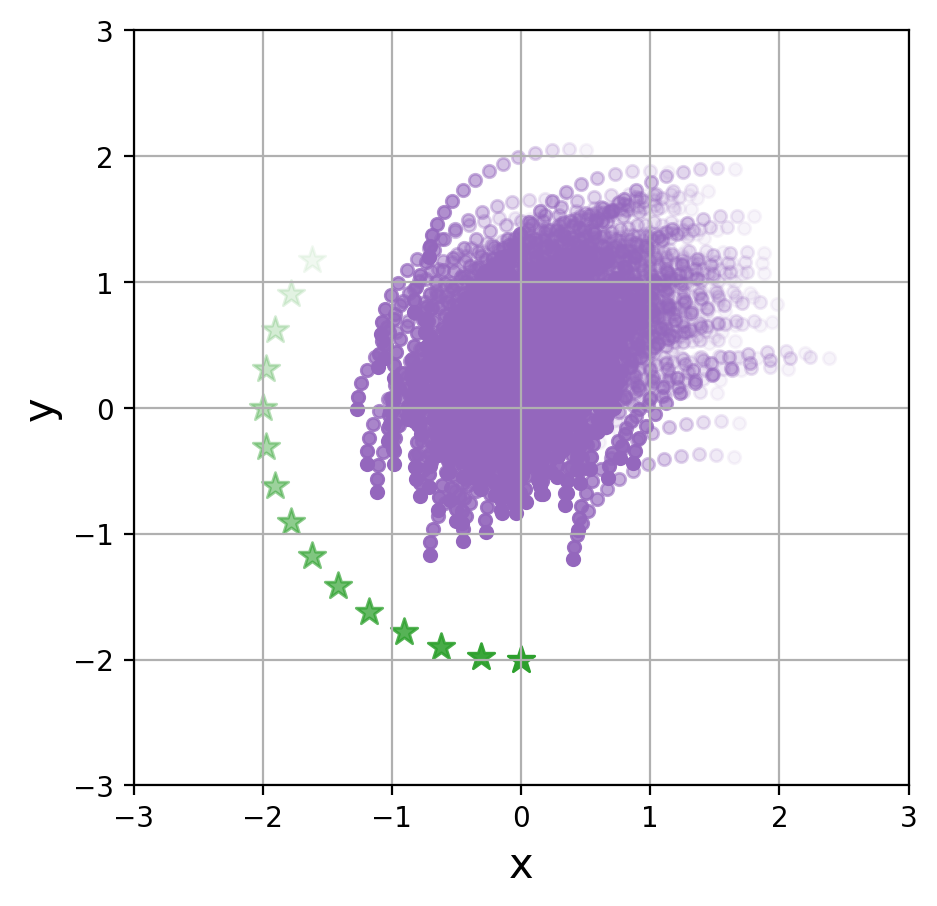}
   \end{subfigure}
   \begin{subfigure}[t]{0.32\textwidth}
   	\centering
   	\includegraphics[width=0.99\textwidth]{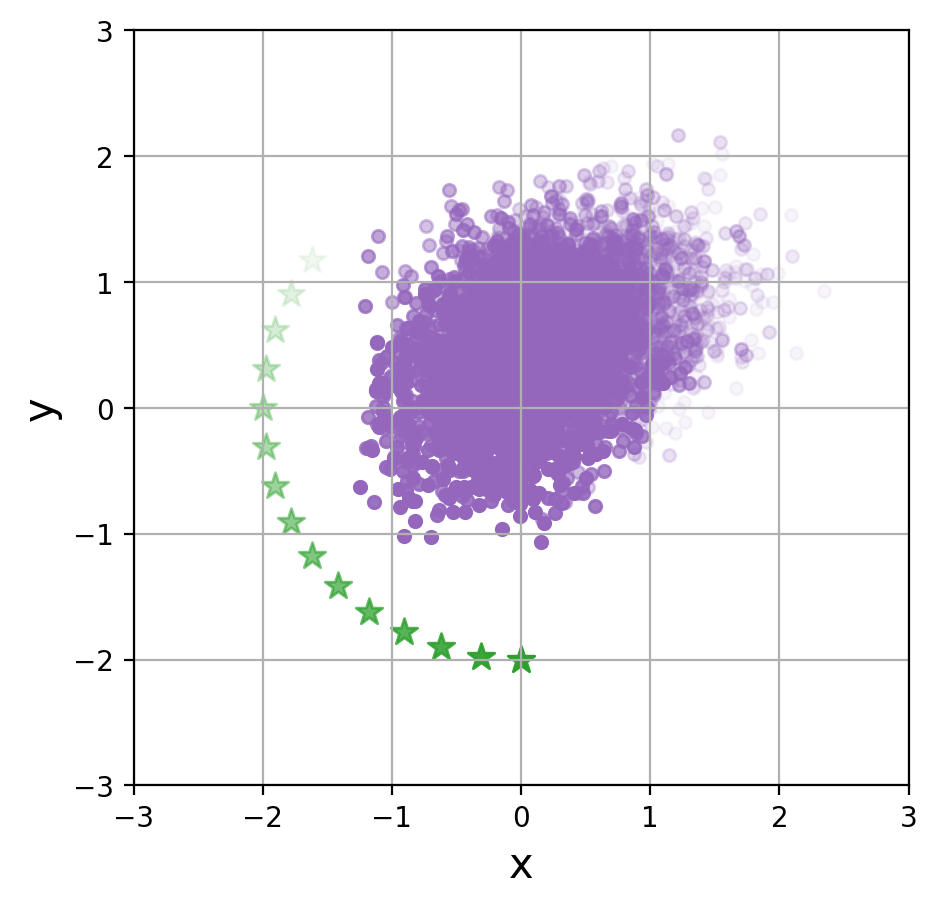}
   \end{subfigure}
   \begin{subfigure}[t]{0.32\textwidth}
   	\centering
   	\includegraphics[width=0.99\textwidth]{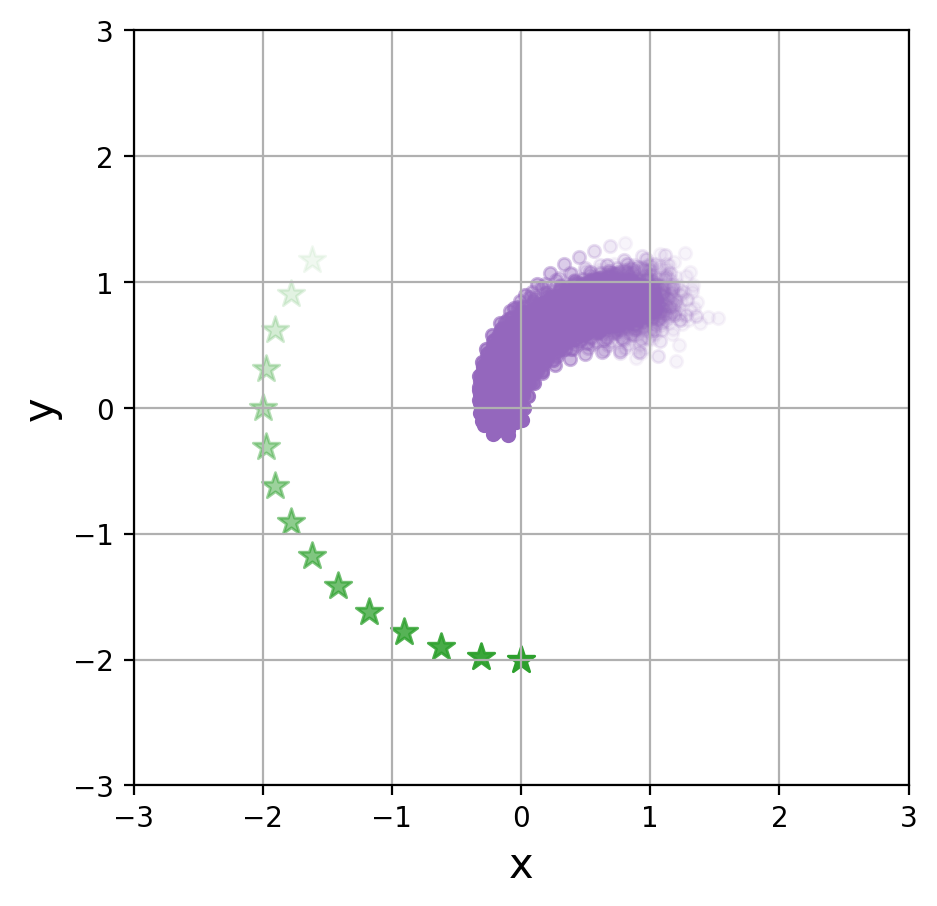}
   \end{subfigure}	

   \begin{subfigure}[t]{0.32\textwidth}
   	\centering
   	\includegraphics[width=0.99\textwidth]{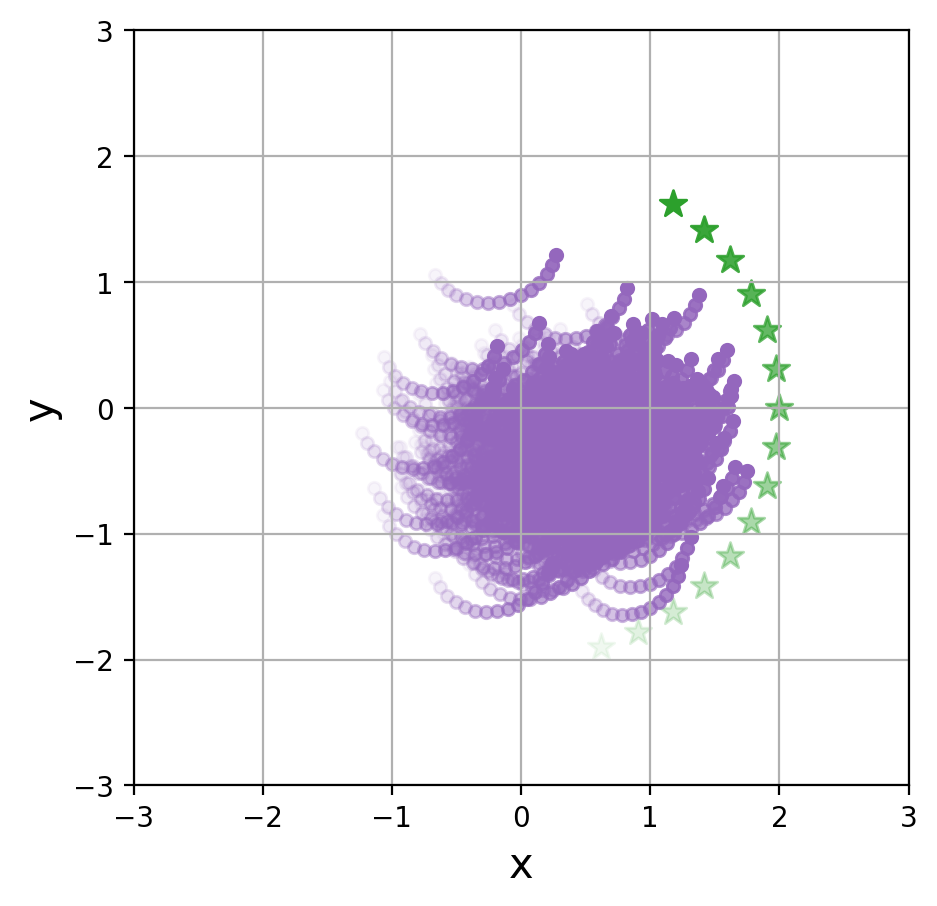}
   \end{subfigure}
   \begin{subfigure}[t]{0.32\textwidth}
   	\centering
   	\includegraphics[width=0.99\textwidth]{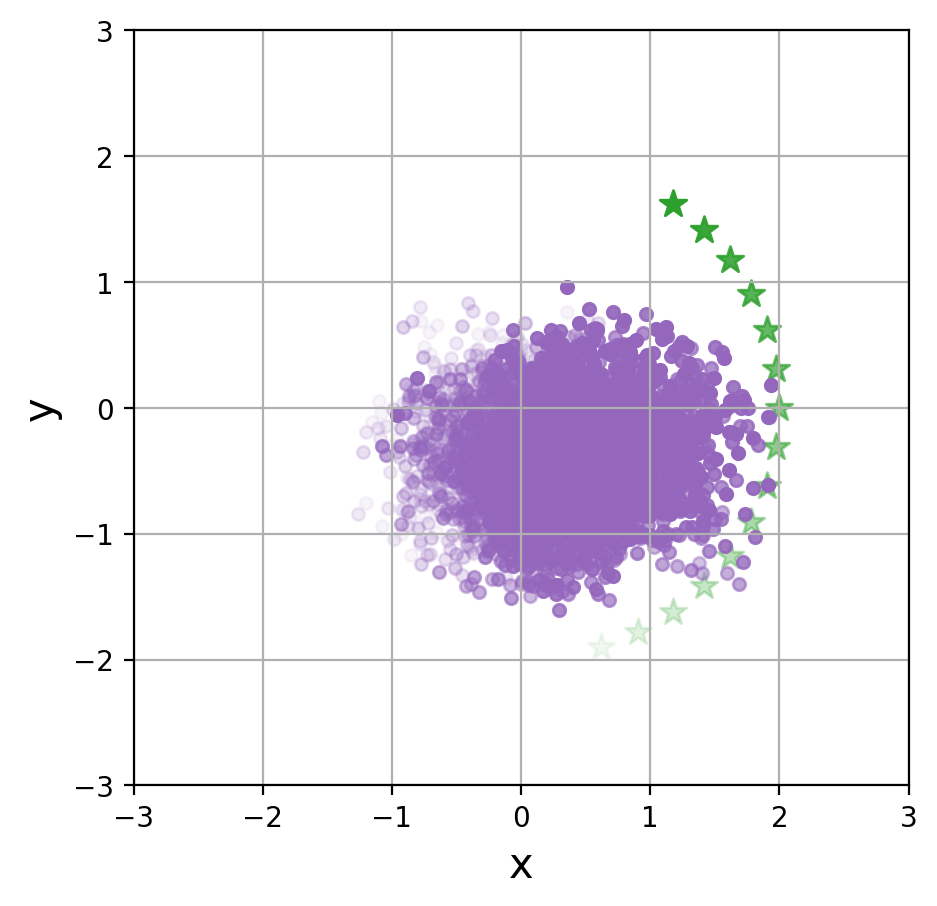}
   \end{subfigure}
   \begin{subfigure}[t]{0.32\textwidth}
   	\centering
   	\includegraphics[width=0.99\textwidth]{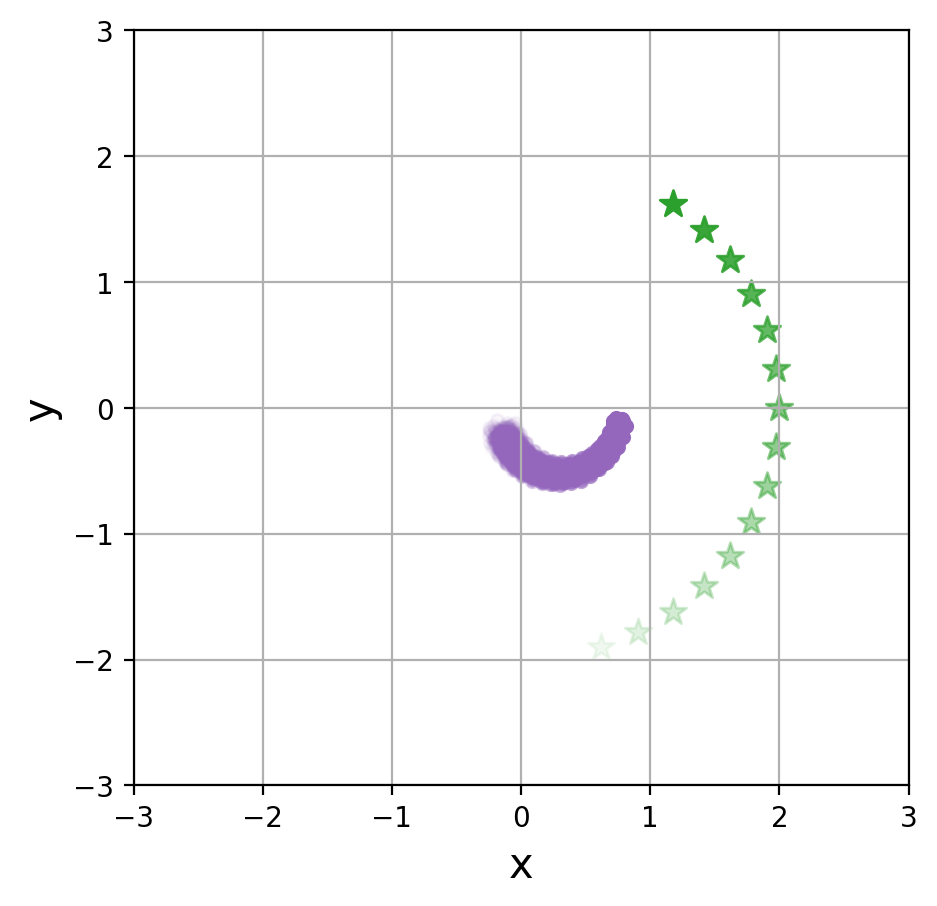}
   \end{subfigure}	

  	\begin{subfigure}[t]{0.32\textwidth}
  	\centering
  	\includegraphics[width=0.99\textwidth]{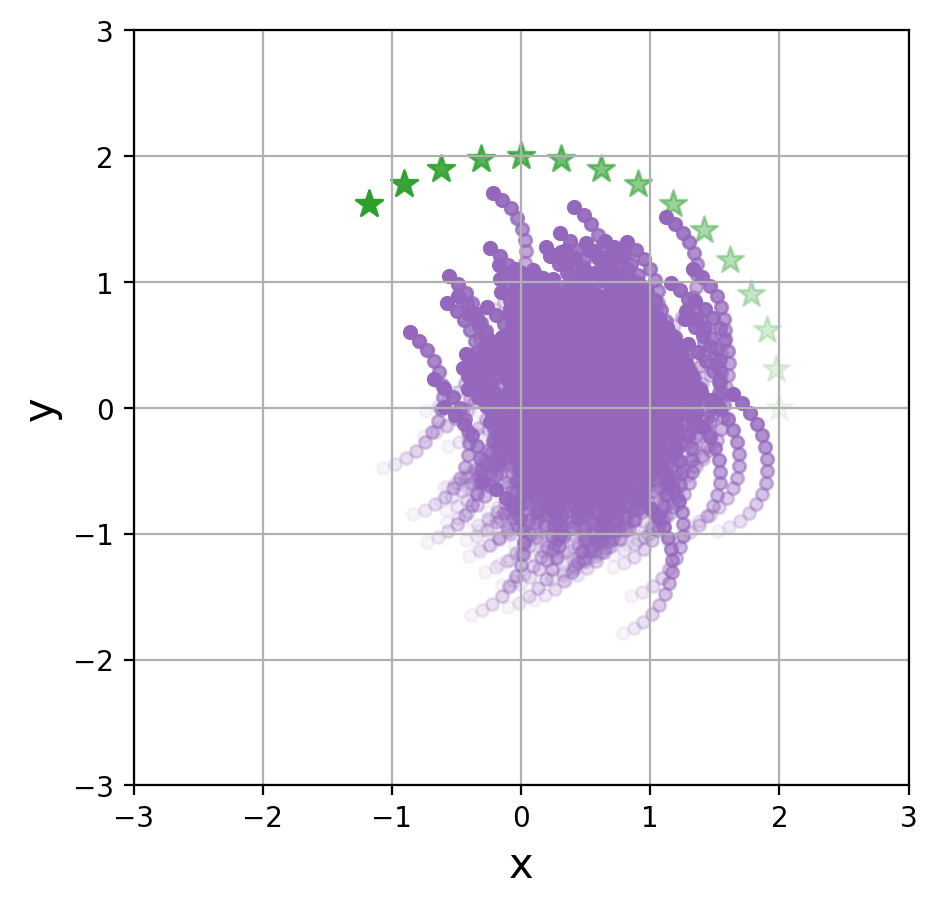}
  \end{subfigure}
  \begin{subfigure}[t]{0.32\textwidth}
  	\centering
  	\includegraphics[width=0.99\textwidth]{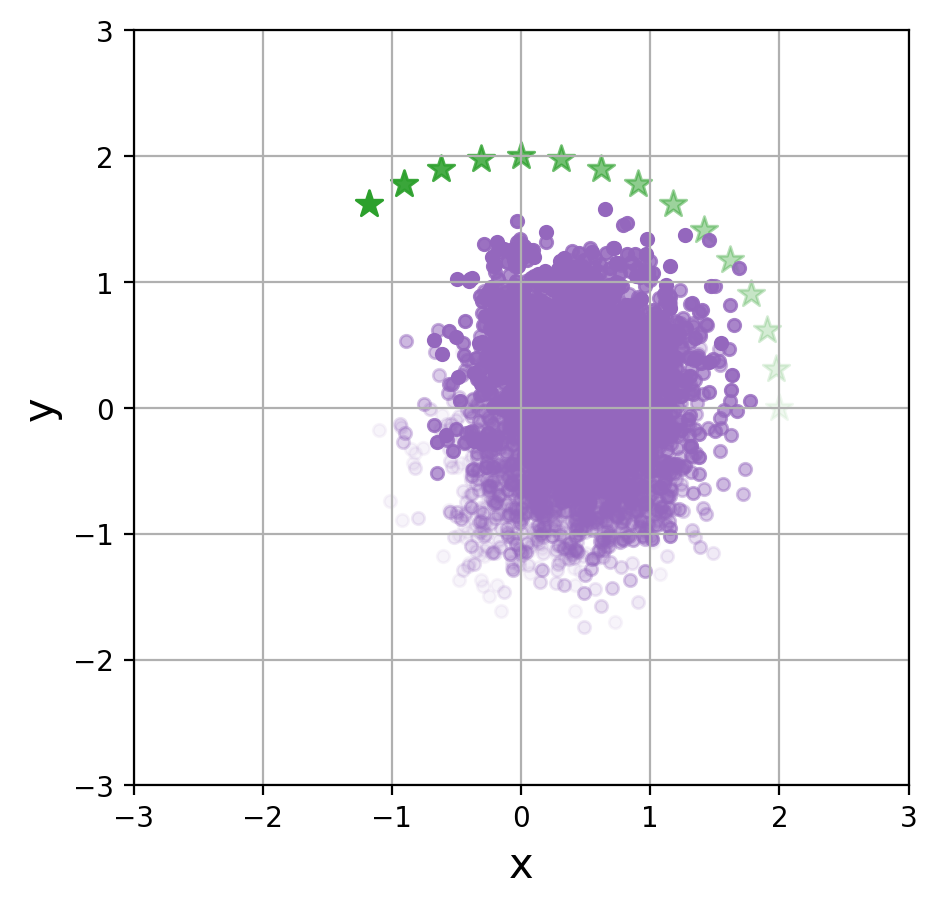}
  \end{subfigure}
  \begin{subfigure}[t]{0.32\textwidth}
  	\centering
  	\includegraphics[width=0.99\textwidth]{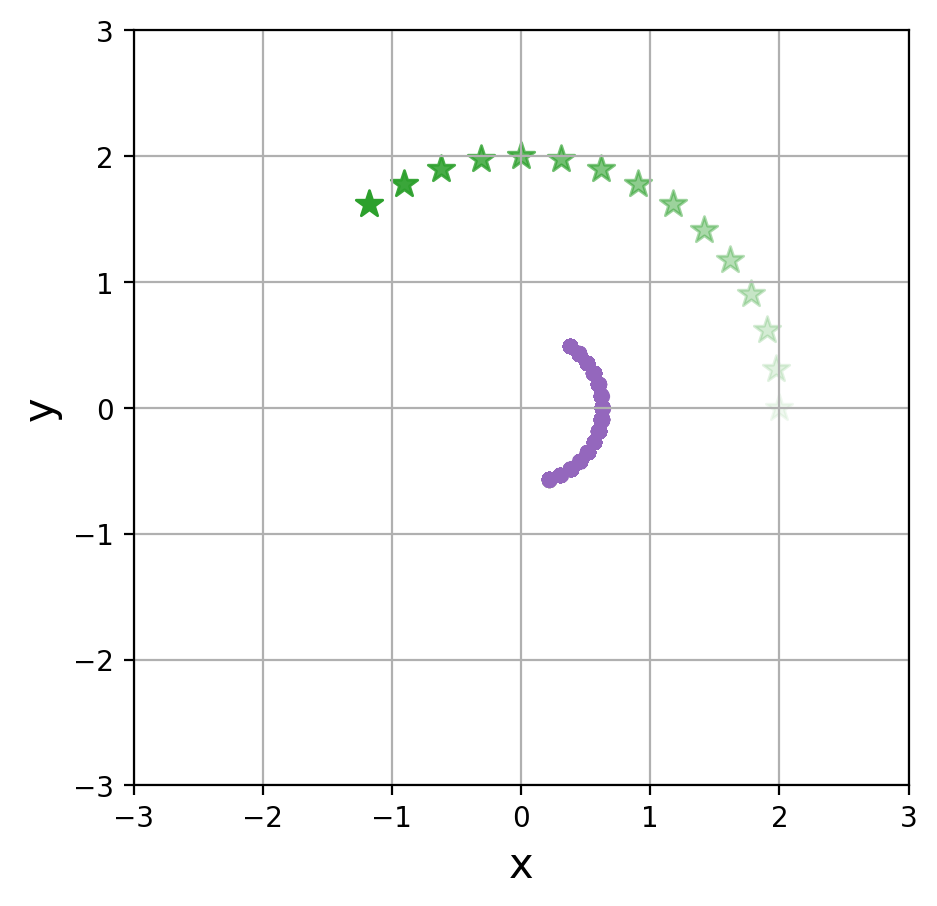}
  \end{subfigure}	
	\caption{Trajectory visualization of $N=300$ particles in a harmonic trap with a harmonic interaction~\eqref{eq:harmonic_SDE}. The mean of the trap $\beta_t$ is shown with a green star and the particles are shown with purple dots. A fading trajectory indicates past positions of the trap mean and the particles. From top to bottom, we present the trajectory along the time.}	
	\label{fig:Trajectory}
\end{figure}

\FloatBarrier
\subsection{Active matter with mean field interaction}
\label{sec:ex2}

In this subsection, we study the mean-field Fokker-Planck equation defined by 
\begin{equation}
\label{eq:swimmerFPE}
  \begin{aligned}
     & \frac{\partial}{\partial t} \rho_t(x, v) = -\frac{\partial}{\partial x} \cdot 
	\left[ 	\left( -x^3 + v \right) \rho_t(x,v) 
	\right]   \\
     & -\frac{\partial}{\partial v} \cdot 
	\left[ 	\left( -\gamma v -  \alpha  \iint_{\Omega} (x -y) \rho_t(y,v) dy dv \right) \rho_t(x, v) 	\right] 
   + \gamma D \frac{\partial^2}{\partial v \partial v} \rho_t(x,v), 
  \end{aligned}
\end{equation}
with $(x,v) \in \Omega \subset \mathbb{R}^{2}$ and parameters $\gamma, \alpha \in \mathbb{R}$.
Correspondingly, there is a two-dimensional system of $N$ particles
and for each particle, the position $x^{(i)} $ and velocity $v^{(i)}$ are described by the SDE
\begin{equation}
	\label{eq:swimmer}
	\begin{aligned}
		dx^{(i)} &= \left(-( x^{(i)})^3 + v_i \right)dt,\\
		dv^{(i)} &= -\gamma v^{(i)} dt -\frac{\alpha}{N} \sum_{j=1}^N (x^{(i)} - x^{(j)}) dt + \sqrt{2\gamma D}dW_t.
	\end{aligned}
\end{equation}
As the number of particles $N$ goes to infinity, their position-velocity pair admits the probability density function $\rho_t(x,v)$ that solves the equation~\eqref{eq:swimmerFPE}.
The system~\eqref{eq:swimmer} generalizes the ``active swimmer'' model in \citep{Boffi} that describes the motion of a particle with a preference to travel in a noisy direction, adding a mean field interaction term with a coefficient $\alpha$.

\paragraph{Setting}
We set $\alpha = 0.5, \gamma = 0.1, D = 1.0, N=5000.$
In the system, the stochastic noise occurs only on the velocity variable $v $ in~\eqref{eq:swimmer}, hence the score $\frac{\partial}{\partial v} \rho_t(x,v)$ is a scalar function.
We parameterize the score directly $s_t:  \mathbb{R}^{2}\rightarrow \mathbb{R}$ using a three-hidden-layer neural network with $\mathtt{n\_hidden} = 32$ neurons per hidden layer, and  $\texttt{swish}$ activation function.
For the minimization in the algorithm,
initial $s_0(x,v)$ is trained to minimize the analytical relative loss as in section~\ref{sec:ex1}
with an initial Gaussian distribution $\rho_0 (x) = N(0, \sigma_0^2 I), \sigma_0=1.$
The analytical relative loss is minimized to a tolerance less than $10^{-4}$. 
After that,	we train the neural network $s_t(x, v), t>0$ by minimizing 
$ \frac{1}{N} \sum_{i=1}^N \left[\vert s_t(X_{t}(x^{(i)}, v^{(i)} )) \vert^2 + 2 \nabla \cdot s_t(X_{t}(x^{(i)}, v^{(i)}))\right]$  
until the norm of the gradient is less than 1 or the number of gradient descents is greater than 3.
Similarly we use denoising loss~\eqref{eq:denoisingL} to approximate the divergence $\frac{\partial}{\partial v}  s_t(x,v).$ 
The time step is $\Delta t=5e-4.$
We initialize the parameters $\theta_{t+\Delta t} $ of the neural network $s_{t+\Delta t}$ to the parameters of $s_t$.
In the training, the Adam optimizer is used with a learning rate of $\eta = 10^{-4}.$

\paragraph{Comparison}
To verify the numerical solutions from the MSBTM algorithm,
we calculate the total variation between the empirical distributions of 5000 samples obtained from the MSBTM algorithm and SDE integration,
since there is no analytical solution to the mean-field Fokker-Planck equation
~\eqref{eq:swimmerFPE}.
Specifically, we discretize the domain $\Omega \subseteq \left[-3,3\right] \times \left[-3,3\right]$ into grids $\left\{\Delta_{i,j}\right\}_{i,j=1}^{12}$ with scale $0.5 \times 0.5$, compute the discrete distribution $P: \left\{\Delta_{i,j}\right\}\to \mathbb{R} $ defined by
\begin{equation}
    P(\Delta_{i,j})= \frac{\# (samples \in \Delta_{i,j})}{\# samples},
\end{equation}
then we calculate the total variation between two discrete probability distributions from samples of the MSBTM algorithm and the SDE integration using
\begin{equation}
    TV( P^{MSBTM}, P^{SDE} )= \sum_{i,j} |P^{MSBTM} (\Delta_{i,j})-P^{SDE} (\Delta_{i,j})|.
\end{equation}
Results for this quantitative comparison are shown in figure~\ref{fig:TV}.
Furthermore, the sample distribution at different time are presented in figure~\ref{fig:Samples}.
We present the results of the MSBTM algorithm, SDE integration and the noise free version with $D_t(x)=0$ in equation~\eqref{eq:swimmer} for comparison.

\begin{figure}[!hbtp]
	\centering
	\includegraphics[width=0.9\textwidth]{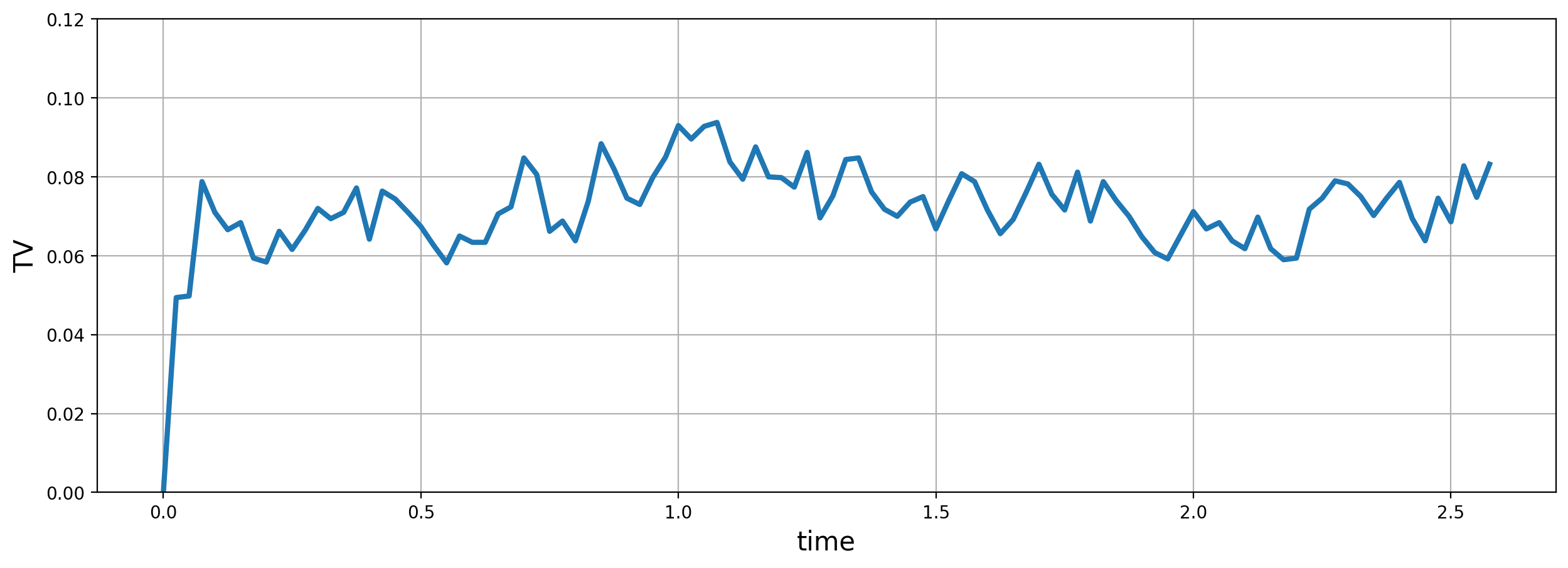}
	\caption{Total variation between 5000 samples of the MSBTM algorithm and those from SDE integration along the time with time step $\Delta t=5e-4$.}
	\label{fig:TV}
\end{figure}

\paragraph{Results} 
Both total variation and samples distribution comparisons demonstrate that
the MSBTM algorithm learns the samples with distribution consistent with that of SDE integration, while the noise-free system grows increasingly and overly compressed with time. 

\begin{figure}[!hbtp]
	\begin{subfigure}[t]{0.32\textwidth}
		\centering
		MSBTM
		\includegraphics[width=0.99\textwidth]{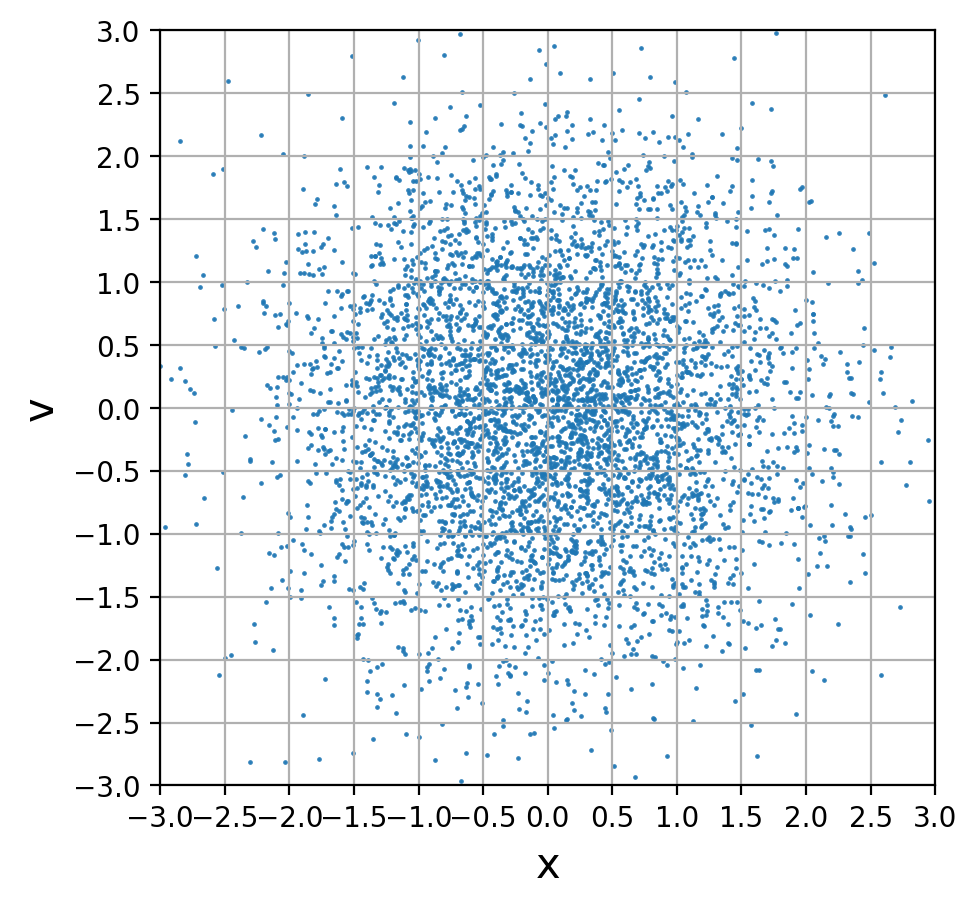}
	\end{subfigure}
	\begin{subfigure}[t]{0.32\textwidth}
		\centering
		SDE
		\includegraphics[width=0.99\textwidth]{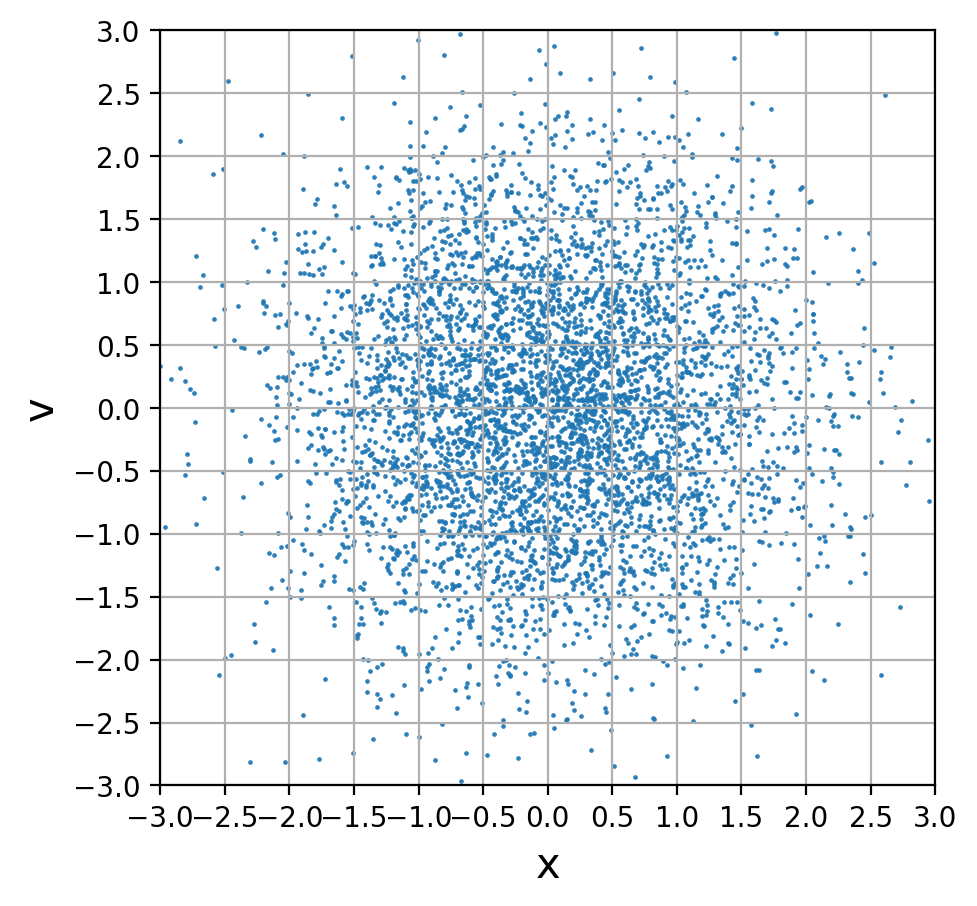}
	\end{subfigure}
	\begin{subfigure}[t]{0.32\textwidth}
		\centering
		Noise free
		\includegraphics[width=0.99\textwidth]{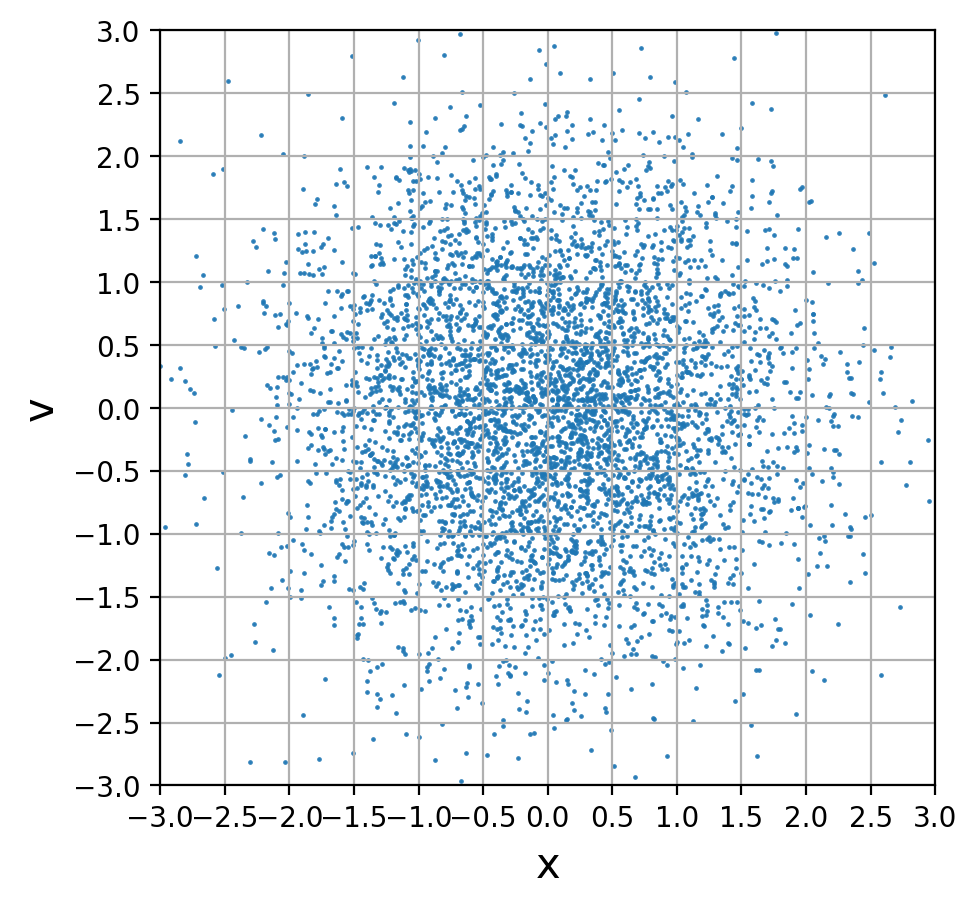}
	\end{subfigure}	
	
	\begin{subfigure}[t]{0.32\textwidth}
		\centering
		\includegraphics[width=0.99\textwidth]{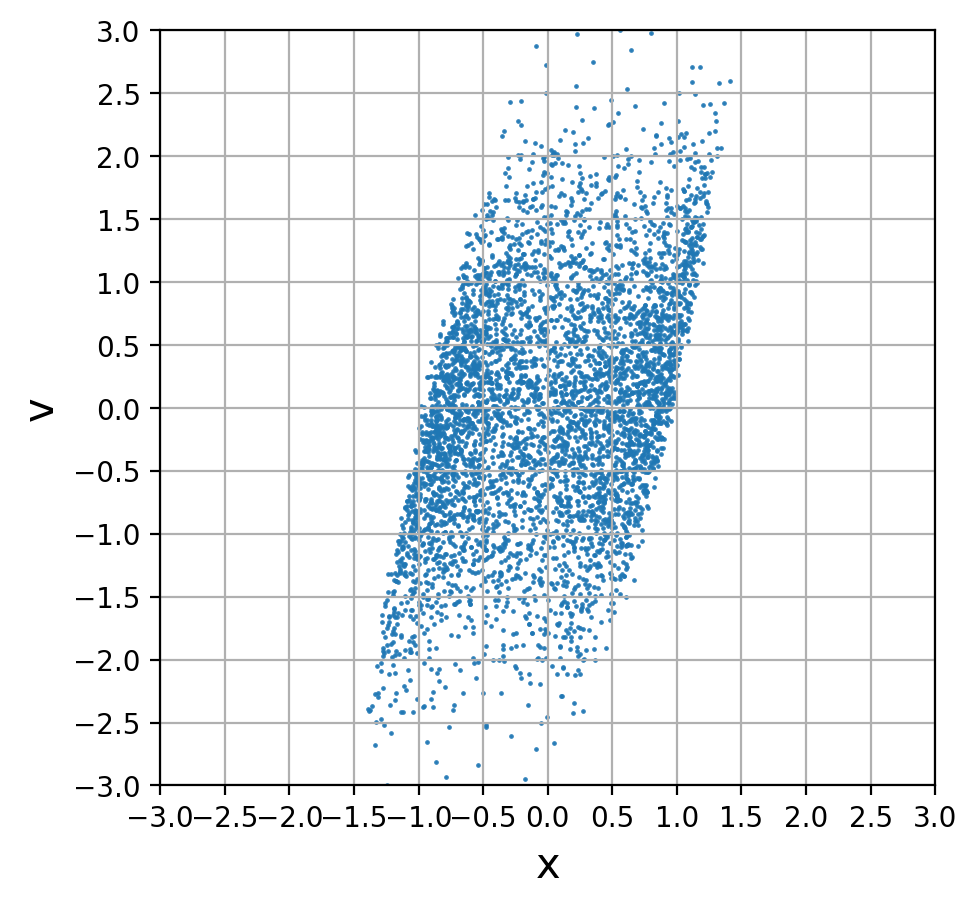}
	\end{subfigure}
	\begin{subfigure}[t]{0.32\textwidth}
		\centering
		\includegraphics[width=0.99\textwidth]{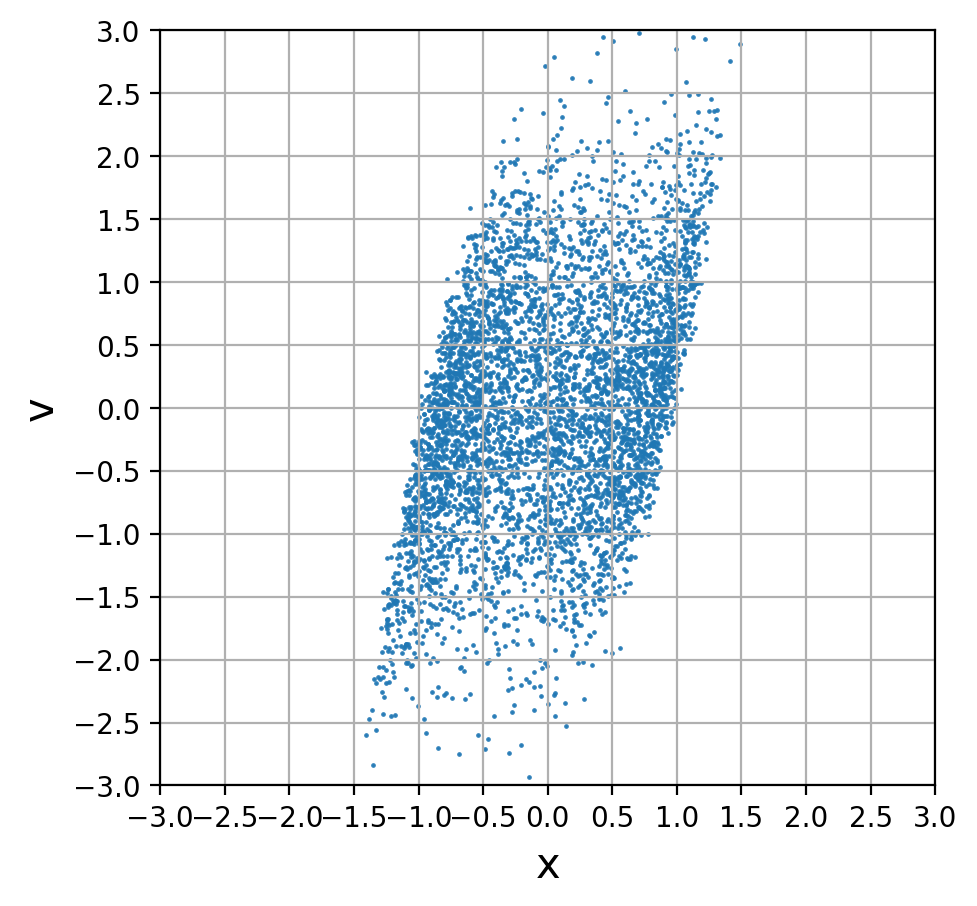}
	\end{subfigure}
	\begin{subfigure}[t]{0.32\textwidth}
		\centering
		\includegraphics[width=0.99\textwidth]{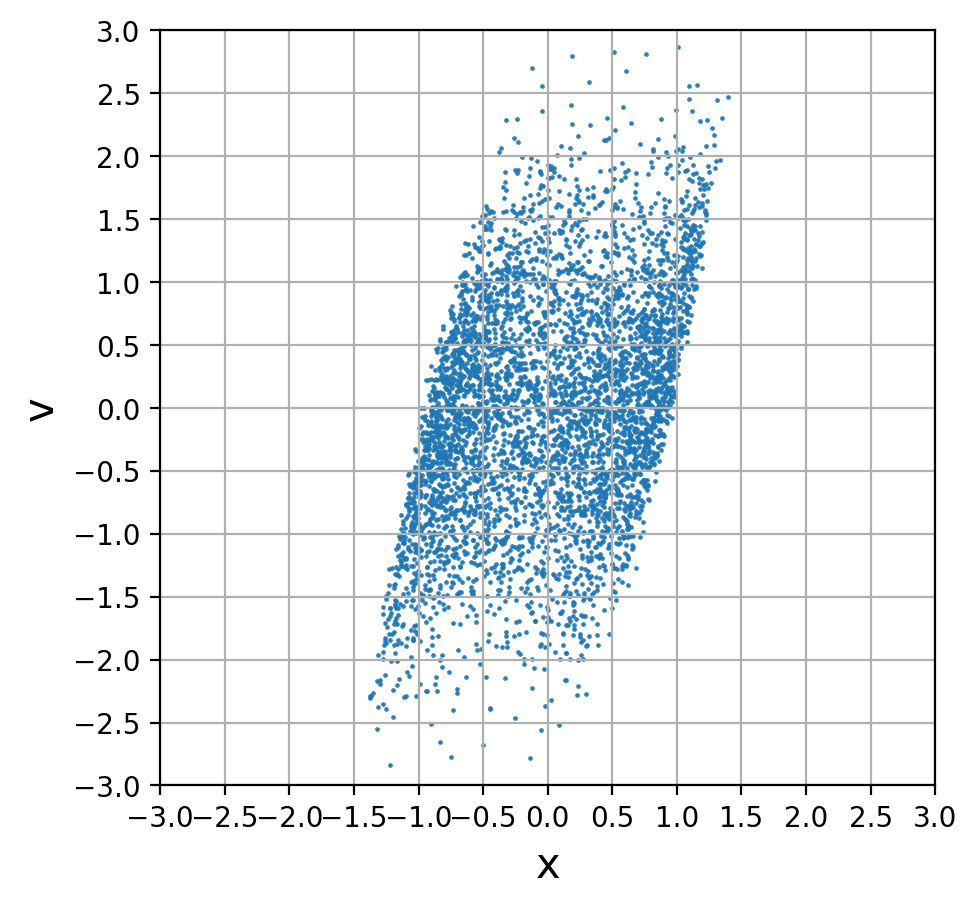}
	\end{subfigure}	
	
	\begin{subfigure}[t]{0.32\textwidth}
		\centering
		\includegraphics[width=0.99\textwidth]{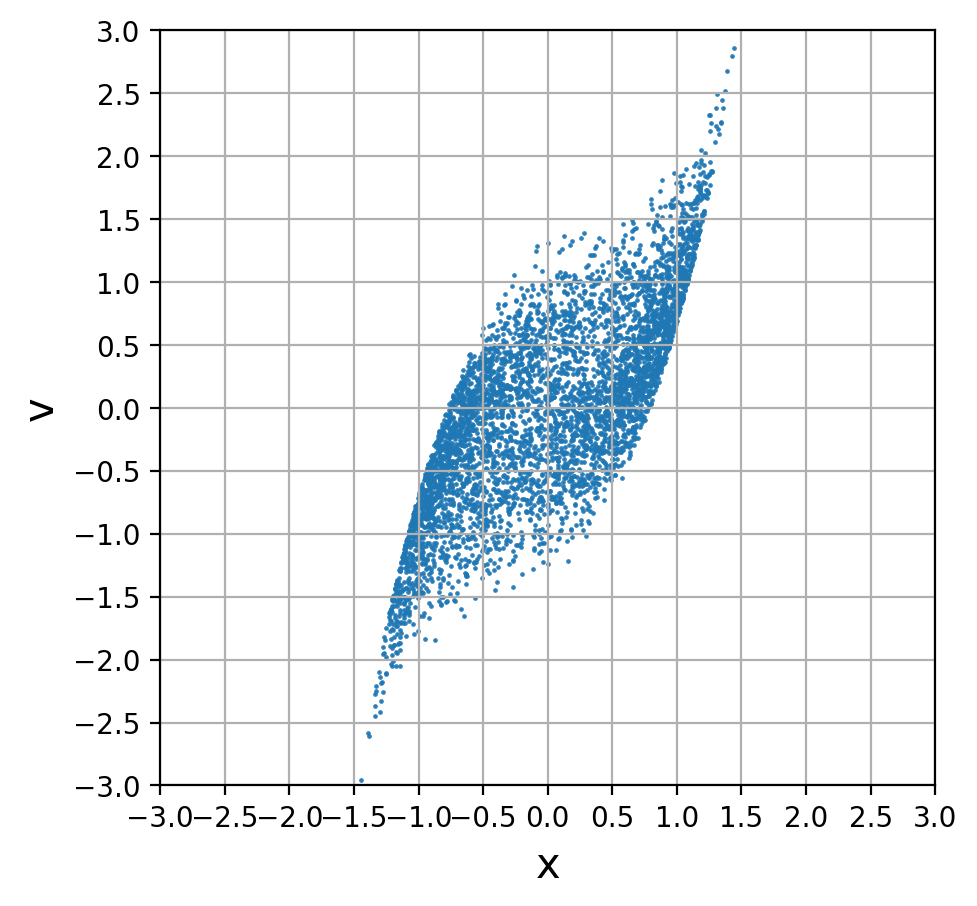}
	\end{subfigure}
	\begin{subfigure}[t]{0.32\textwidth}
		\centering
		\includegraphics[width=0.99\textwidth]{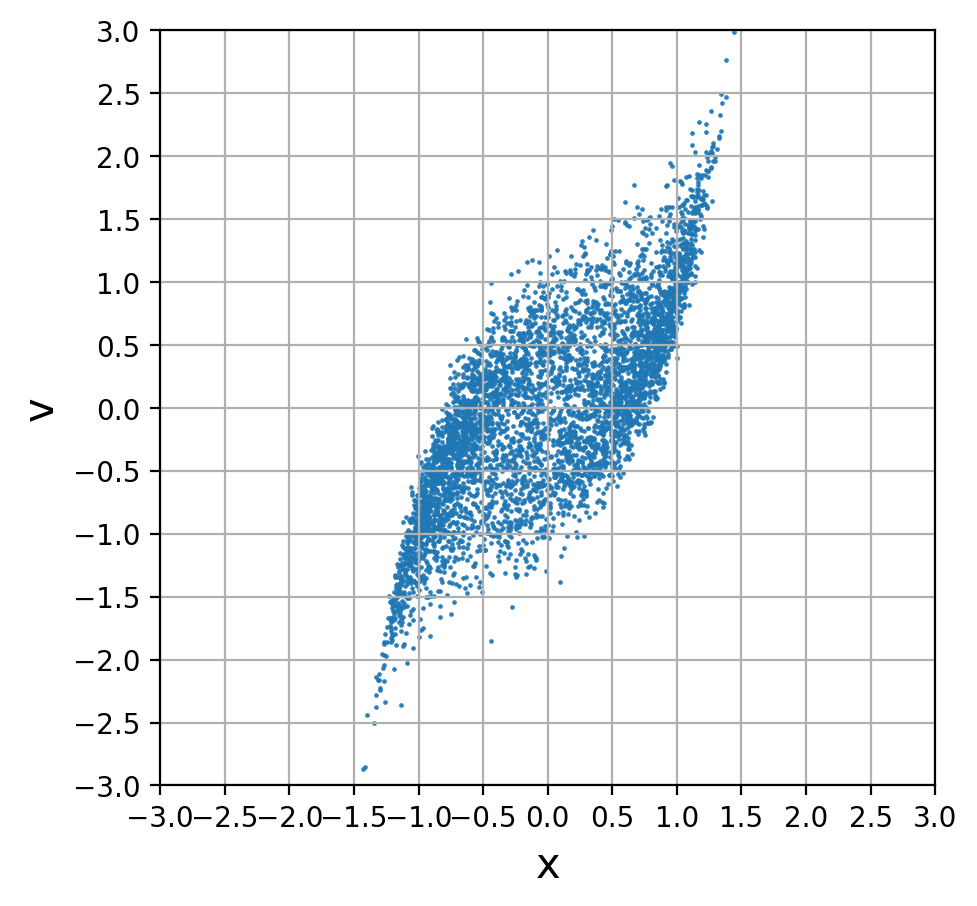}
	\end{subfigure}
	\begin{subfigure}[t]{0.32\textwidth}
		\centering
		\includegraphics[width=0.99\textwidth]{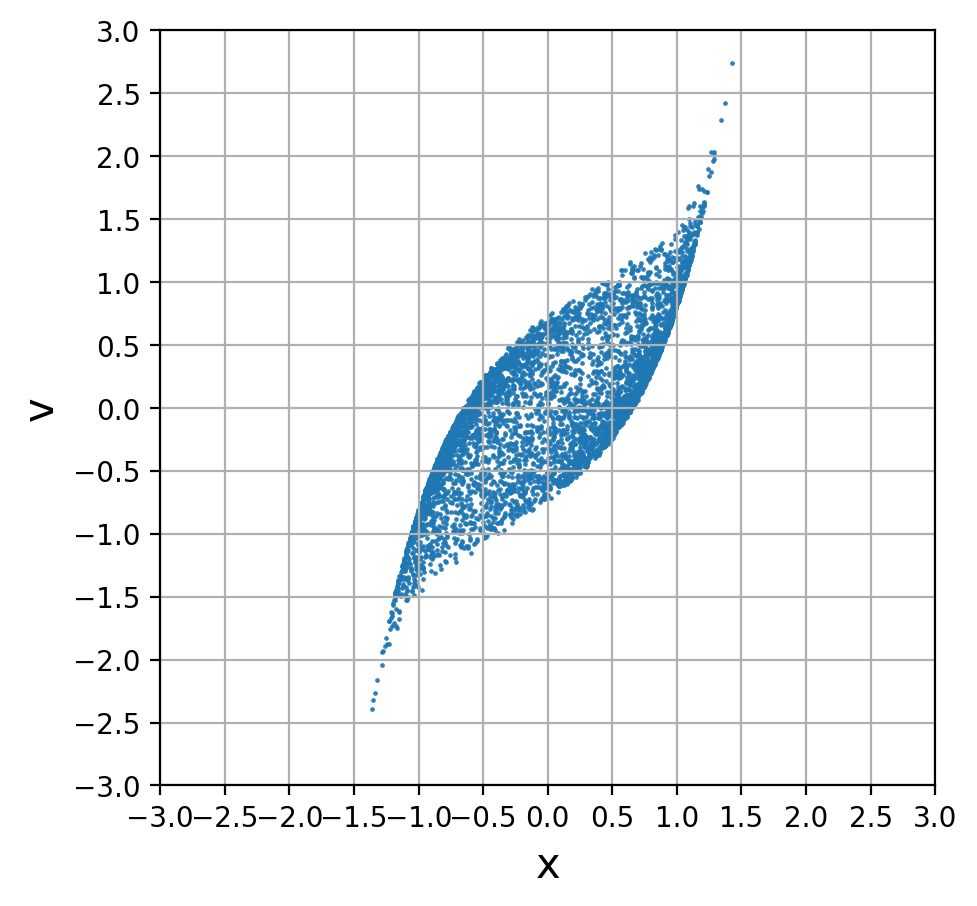}
	\end{subfigure}	
	
	\begin{subfigure}[t]{0.32\textwidth}
		\centering
		\includegraphics[width=0.99\textwidth]{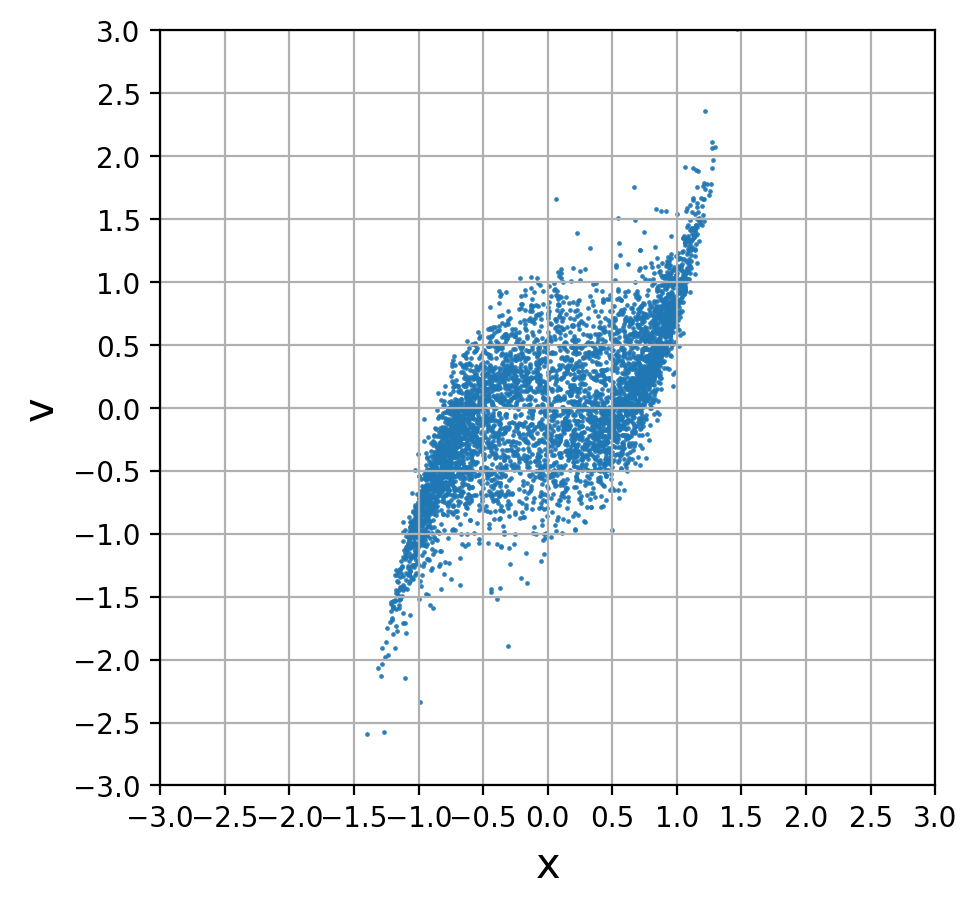}
	\end{subfigure}
	\begin{subfigure}[t]{0.32\textwidth}
		\centering
		\includegraphics[width=0.99\textwidth]{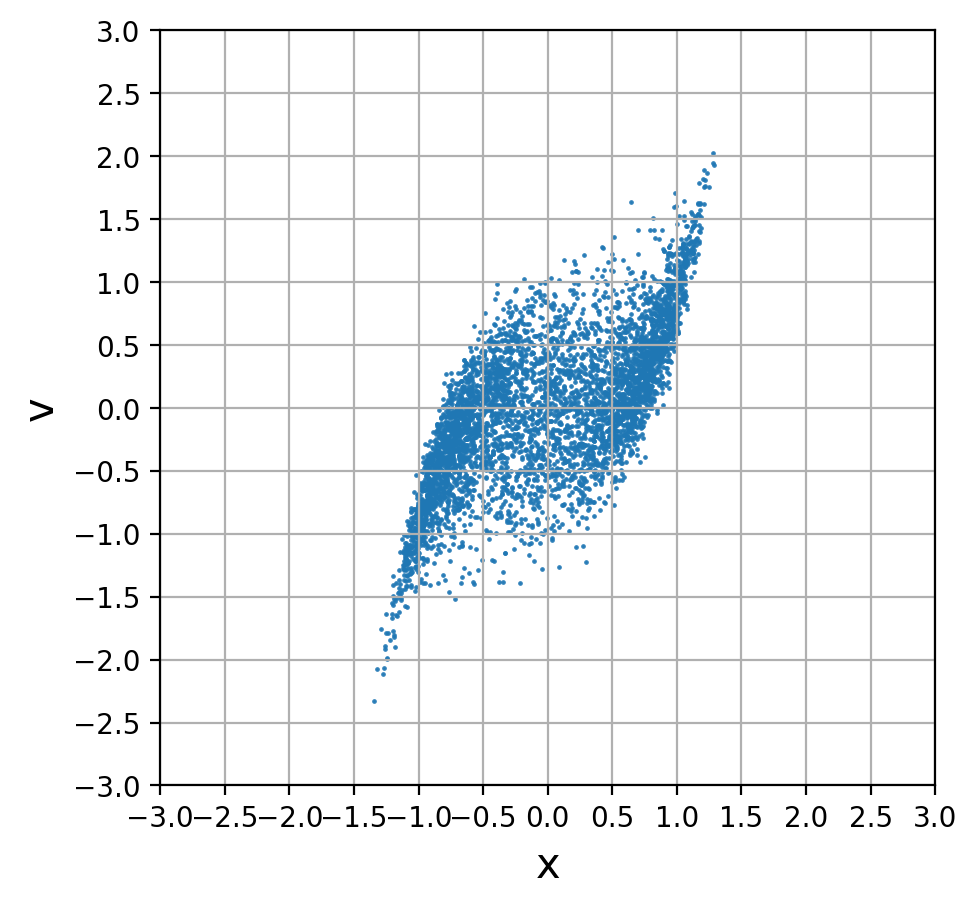}
	\end{subfigure}
	\begin{subfigure}[t]{0.32\textwidth}
		\centering
		\includegraphics[width=0.99\textwidth]{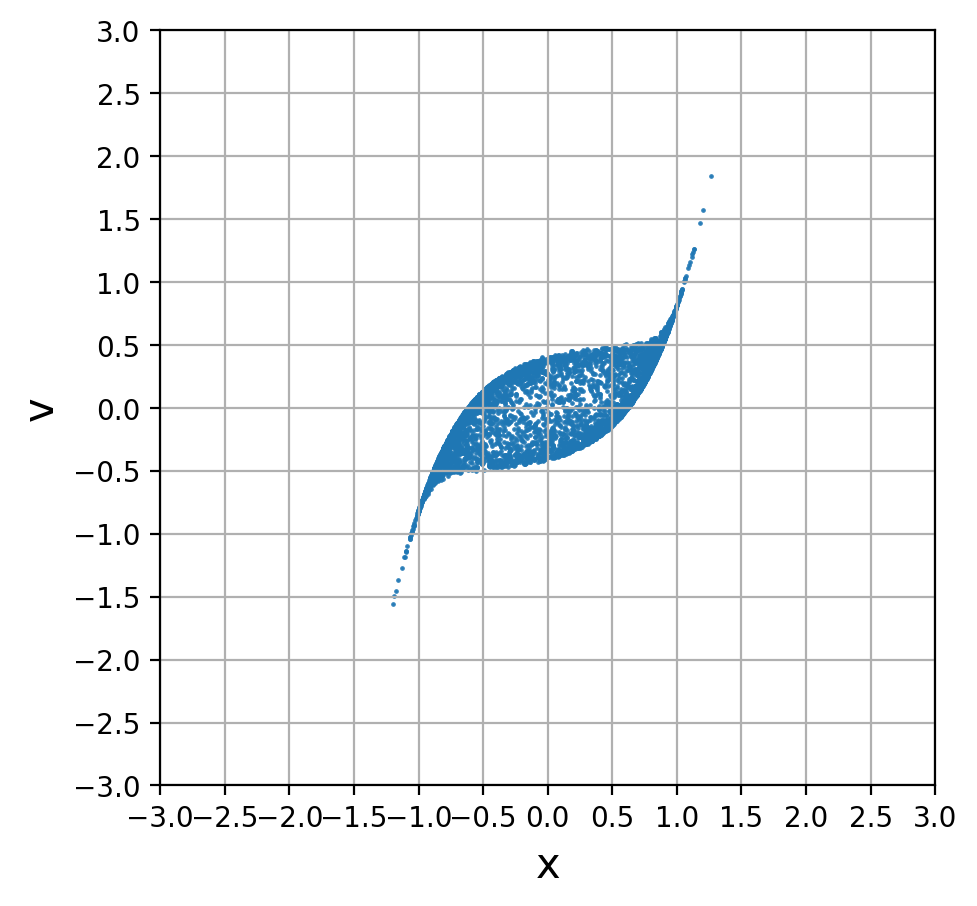}
	\end{subfigure}	
	\caption{5000 samples distribution in $xv$ plane. 
		Left: MSBTM, middle: SDE, right: SDE without noise. 
		From top to bottom: we present the samples at 0, 1000, 3000, 5000-th time steps
        respectively.}	
	\label{fig:Samples}
\end{figure}


\FloatBarrier
\section{Conclusions}
\label{sec:conclusions}

This paper studies the application of the score-based transport modeling to solving the mean-field Fokker-Planck equation, which extends the work \citep{Boffi} to a more general problem and saves the computational complexity for the interacting system with a large number of particles.
We prove an upper bound on the time derivative of the KL divergence to the estimated density from the solution of the equation as shown in Proposition~\ref{prop:msbtmseq} and provide an error analysis for the algorithm in Proposition~\ref{prop:error}. 
In numerical examples,
we consider the dynamics of interacting particle system from the mean field perspective so that we can avoid concatenating particles together and magnifying the dimension by the number of particles, and save the step of imposing particle permutation symmetry. 
Consequently, this extension simplifies equation solving and saves the computational cost when the number of particles is large.
Both quantitative and qualitative comparisons indicate MSBTM can approximate the solution as well as the SDE integration. 
In addition, MSBTM can compute quantities such as pointwise values of the density, entropy, that are not directly accessible when integrating SDE,
though MSBTM needs to learn the score and is more computationally expensive than integrating SDE.

\section*{Acknowledgement}
This work was done during YW's internship under the supervision of Professor Lu, in the Department of Mathematics, Duke University and Rhodes Information Initiative at Duke.
YX was partially supported by the Project of Hetao Shenzhen-HKUST Innovation Cooperation Zone HZQB-KCZYB-2020083.

\bibliographystyle{elsarticle-harv} 
\bibliography{main}


\appendix

\section{Proofs}
\label{app:proof}

Following the notation and assumptions in section~\ref{sec:assumptions}, first we prove a Lemma~\ref{lemma:entropy}, then Proposition~\ref{prop:msbtmseq} and
Proposition~\ref{prop:error}.

\subsection{Proof: Lemma~\ref{lemma:entropy}}
In Lemma~\ref{lemma:entropy}, it is shown that the time derivative of KL divergence to $\rho_t$ (solution of the transport equation~\eqref{eq:transport}) from  $\rho_t^{\star}$ (solution of the mean-field Fokker-Planck equation~\eqref{eq:FPE}) can be upper bounded by the difference between the neural network approximation $s_t$ and the score $\nabla \log \rho_t $.
\begin{restatable}[Control of the KL divergence]{lemma}{msbtment}
\label{lemma:entropy}
Let $\rho_t$ denote the solution to the transport equation~\eqref{eq:transport} 
and $\rho^{\star}_t$ denote the solution to the mean-field Fokker-Planck equation~\eqref{eq:FPE}. 
Under the assumptions in section~\ref{sec:assumptions}, we have 
\begin{equation}
		\begin{aligned}
			\frac{d}{dt} D_{\mathrm{KL}}(\rho_t \Vert \rho^{\star}_t)
			\le
			\frac12 \max\left\{ \hat{D}_t, \hat{C}_{K,t}
			\right\}
			\int_\Omega \|s_t(x)-\nabla \log \rho_t(x) \|^2 \rho_t(x) dx  ,
		\end{aligned}
	\end{equation}
where
$\hat{D}_t  := \sup_{x \in \Omega} \Vert D_t(x) \Vert < \infty, \Vert D_t(x) \Vert  = \sup_{z: \Vert z \Vert =1} z^\T D_t(x) z ; $ 
and 
\begin{equation*}
 	C_t := \sup_{x \in \Omega} 
 	\dfrac{	\lVert
 		\nabla \log \rho_t(x)-
 		\rho^{\star}_t(x)/\rho_t(x)  \nabla \log \rho_t^{\star}(x)
 		\lVert}{\lVert  \nabla \log \rho_t(x)-  \nabla \log  \rho^{\star}_t(x) \lVert} < \infty,
\end{equation*}
$\hat{C}_{K,t} :=\sup_{x,y \in \Omega } \vert \tilde{K}(x,y) \vert (1+ C_t^2) < \infty$ with $ \tilde{K} (x,y) = \int_{\Omega} K(x,y)dy.$
Furthermore, assuming that $\rho_{t=0}(x) = \rho^{\star}_{t=0}(x) $ for all $x \in \Omega$,
	we have for any $T\in [0,\infty)$,
	\begin{equation}
		\label{eq:ent}
		D_{\mathrm{KL}}(\rho_t \Vert \rho^{\star}_t) \le \frac12 \int_0^T 
		\max \left\{ \hat{D}_t, \hat{C}_{K,t}\right\} 
		\int_\Omega \|s_t(x) - \nabla \log \rho_t(x) \|^2 \rho_t(x)dx dt.
	\end{equation}
\end{restatable}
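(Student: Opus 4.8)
The plan is to differentiate the relative entropy in time and reduce everything to a single cross term pairing the score mismatch against the velocity mismatch. First I would write $D_{\mathrm{KL}}(\rho_t \Vert \rho^\star_t)=\int_\Omega \rho_t \log(\rho_t/\rho^\star_t)\,dx$ and differentiate, using mass conservation $\int_\Omega \partial_t \rho_t\,dx=0$ to kill the $\int_\Omega \rho_t\,\partial_t \log \rho_t$ contribution. Substituting the two evolution laws $\partial_t \rho_t=-\nabla\cdot(v_t \rho_t)$ and $\partial_t \rho^\star_t=-\nabla\cdot(v^\star_t \rho^\star_t)$ and integrating by parts twice (the boundary terms vanish by the decay/boundedness hypotheses of Section~\ref{sec:assumptions}), I expect the clean identity $\frac{d}{dt}D_{\mathrm{KL}}(\rho_t \Vert \rho^\star_t)=\int_\Omega \rho_t\,(\nabla\log\rho_t-\nabla\log\rho^\star_t)\cdot(v_t-v^\star_t)\,dx$, with $v^\star_t=f_t-\int_\Omega K(\cdot,y)\rho^\star_t(y)\,dy-D_t\nabla\log\rho^\star_t$. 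Writing $w:=\nabla\log\rho_t-\nabla\log\rho^\star_t$, this integral is the object to bound.

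The next step is to split the velocity mismatch into a local diffusion piece and a nonlocal mean-field piece, $v_t-v^\star_t=\bigl(\int_\Omega K(\cdot,y)(\rho^\star_t-\rho_t)(y)\,dy\bigr)-D_t\bigl(s_t-\nabla\log\rho^\star_t\bigr)$, and to estimate the two contributions to $\int_\Omega \rho_t\,w\cdot(v_t-v^\star_t)\,dx$ separately. For the diffusion piece I would decompose $s_t-\nabla\log\rho^\star_t=(s_t-\nabla\log\rho_t)+w$; the $w$ part produces the strictly dissipative term $-\int_\Omega \rho_t\,w^\T D_t w\,dx\le 0$ (positive definiteness of $D_t$ is essential here), while the $s_t-\nabla\log\rho_t$ part is a genuine cross term. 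Pairing the cross term against $w$ in the $D_t$-weighted inner product and applying Cauchy--Schwarz and Young's inequality lets me spend part of the dissipation to absorb the resulting $w$-contribution, leaving a clean $\tfrac12\hat D_t\int_\Omega \Vert s_t-\nabla\log\rho_t\Vert^2 \rho_t\,dx$ term with $\hat D_t=\sup_x\Vert D_t(x)\Vert$.

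The main obstacle is the nonlocal mean-field term $\int_\Omega \rho_t(x)\,w(x)\cdot\bigl(\int_\Omega K(x,y)(\rho^\star_t-\rho_t)(y)\,dy\bigr)\,dx$, which does not fit the standard SBTM dissipation structure because it couples $w$ to the global density difference $\rho^\star_t-\rho_t$ rather than to a local gradient. My plan is to control the inner integral through the bounded aggregated kernel $\tilde K(x,y)=\int_\Omega K(x,y)\,dy$ and to trade the density difference for a gradient quantity: the constant $C_t$ is designed precisely so that the vector $\nabla\log\rho_t-(\rho^\star_t/\rho_t)\nabla\log\rho^\star_t$, which equals $\nabla(\rho_t-\rho^\star_t)/\rho_t$, is pointwise dominated by $C_t\Vert w\Vert$. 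Using Cauchy--Schwarz and a Young split with a tuned parameter, the interaction term should be bounded by a multiple of $\sup_{x,y}|\tilde K(x,y)|\,(1+C_t^2)$ times $\int_\Omega \Vert s_t-\nabla\log\rho_t\Vert^2 \rho_t\,dx$, up to further $\Vert w\Vert^2$ contributions. Both cross terms are then absorbed against the single dissipation $-\int_\Omega \rho_t\,w^\T D_t w\,dx$: each contributes one score-error term (with constant $\hat D_t$ and $\hat C_{K,t}$ respectively) plus a $w$-term, and choosing the Young weights so that the total $w$-coefficient stays nonpositive forces the constant to be the larger of the two, which is exactly how $\max\{\hat D_t,\hat C_{K,t}\}$ arises rather than a sum. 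I expect the delicate points to be (i) verifying finiteness of $C_t$ and $\hat C_{K,t}$ from the smoothness and strict positivity assumptions, and (ii) allocating the Young parameters so all $\Vert w\Vert^2$ terms are genuinely absorbed.

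Finally, with the pointwise-in-time inequality established, I would integrate from $0$ to $T$. Under the matching initial condition $\rho_{t=0}=\rho^\star_{t=0}$ we have $D_{\mathrm{KL}}(\rho_0 \Vert \rho^\star_0)=0$, so the fundamental theorem of calculus yields $D_{\mathrm{KL}}(\rho_T \Vert \rho^\star_T)\le \tfrac12\int_0^T \max\{\hat D_t,\hat C_{K,t}\}\int_\Omega \Vert s_t-\nabla\log\rho_t\Vert^2 \rho_t\,dx\,dt$, which is exactly \eqref{eq:ent}; a change of variables along the flow $X_{0,t}$ rewrites the spatial integral against $\rho_0$ and matches the form used in Proposition~\ref{prop:msbtmseq}.
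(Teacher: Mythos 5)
Your opening steps are sound and coincide with the paper's: the dissipation identity $\frac{d}{dt}D_{\mathrm{KL}}(\rho_t\Vert\rho_t^\star)=\int_\Omega \rho_t\, w\cdot(v_t-v_t^\star)\,dx$ with $w:=\nabla\log\rho_t-\nabla\log\rho_t^\star$, the splitting of $v_t-v_t^\star$ into a $D_t$ piece and a mean-field piece, and the identity $\nabla\log\rho_t-(\rho_t^\star/\rho_t)\nabla\log\rho_t^\star=\nabla(\rho_t-\rho_t^\star)/\rho_t$ behind the constant $C_t$ are all exactly what the paper does. Your treatment of the diffusion piece (weighted Cauchy--Schwarz plus Young, keeping $-\tfrac12\int_\Omega\rho_t\,w^\T D_t w\,dx$ and producing $\tfrac{\hat D_t}{2}\int_\Omega\|s_t-\nabla\log\rho_t\|^2\rho_t\,dx$) is correct, and in fact more careful than the paper's corresponding step. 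The final time integration is also fine.

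The gap is in the mean-field term and the final absorption. First, the interaction term $\int_\Omega\rho_t(x)\,w(x)\cdot\bigl(\int_\Omega K(x,y)(\rho_t^\star-\rho_t)(y)\,dy\bigr)dx$ contains no $s_t$ anywhere; after the $\tilde K$ integration by parts and the $C_t$ bound it is controlled \emph{purely} by $\tfrac12\hat C_{K,t}\int_\Omega\|w\|^2\rho_t\,dx$, so no allocation of Young parameters can make a $\|s_t-\nabla\log\rho_t\|^2$ term with constant $\hat C_{K,t}$ appear out of it, contrary to what you assert. Second, your plan to absorb that $\|w\|^2$ contribution into the dissipation $-\int_\Omega\rho_t\,w^\T D_t w\,dx$ cannot close under the stated assumptions: the dissipation is weighted by $D_t$, which is only assumed pointwise positive definite with no uniform lower bound $\lambda I\preceq D_t$, and even granting uniform ellipticity you would need a size condition of the form $\hat C_{K,t}\lesssim\lambda$, which no tuning of Young weights can manufacture because the interaction constant is fixed. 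Since the lemma's constant $\tfrac12\max\{\hat D_t,\hat C_{K,t}\}$ involves no ellipticity constant of $D_t$, your route cannot produce the stated bound; at best it proves the lemma under an extra hypothesis like $D_t\succeq 2\hat C_{K,t}I$. The paper's mechanism — the idea missing from your proposal — never uses dissipation against the interaction term: it keeps the diffusion contribution as a \emph{signed} cross term $\hat D_t\int_\Omega(s_t^\star-s_t)\cdot w\,\rho_t\,dx$ (with $s_t^\star=\nabla\log\rho_t^\star$), bounds the interaction by $\tfrac12\hat C_{K,t}\int_\Omega\|w\|^2\rho_t\,dx$, and then pairs the two via the square-completion inequality $\|w\|^2+2\,w\cdot(s_t^\star-s_t)\le\|s_t-\nabla\log\rho_t\|^2$, valid because $w+(s_t^\star-s_t)=\nabla\log\rho_t-s_t$. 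Taking the larger of the two constants so that a single completion of squares dominates both terms simultaneously is precisely where $\max\{\hat D_t,\hat C_{K,t}\}$ comes from; replacing your ``absorb into dissipation'' step by this pairing is what you need to repair the argument.
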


\begin{proof}
By definition, for $x \in \Omega \subseteq \mathbb{R}^{d},$
	$\rho_t (x)$ solves 
	$ \frac{\partial}{\partial t} \rho_t(x) = - \nabla \cdot (v_t(x, \rho_t) \rho_t(x)) $
	with  $$v_t(x, \rho_t) = b_t(x, \rho_t) - D_t(x) s_t(x)
	=f_t(x) - \int_{\Omega} K(x, y) \rho_t(y) dy- D_t(x) s_t(x) ,$$ 
	and $\rho_t^{\star} (x)$ solves 
	$	\frac{\partial}{\partial t} \rho_t^{\star}(x) = -\nabla \cdot (v^{\star}_t(x, \rho_t^{\star}) \rho_t^{\star}(x)) , $
	with $$ v^{\star}_t(x, \rho_t^{\star}) = b_t(x, \rho_t^{\star}) - D_t(x) \nabla \log\rho_t^{\star}
	=f_t(x) - \int_{\Omega} K(x, y) \rho_t^{\star}(y) dy - D_t(x) \nabla \log \rho_t^{\star}(x) .$$ 
 Denote $s^{\star}_t(x)=\nabla \log \rho_t^{\star} (x)$, 
then we have
	\begin{equation}
		\label{eq:ent:s1}
		\begin{aligned}
			&
			\frac{d}{dt} D_{\mathrm{KL}}(\rho_t \Vert \rho^{\star}_t) = \frac{d}{dt} \int_\Omega \log\left(\frac{\rho_t(x)}{\rho_t^{\star}(x)}\right) \rho_t(x) dx   \\
			& = -\int_\Omega \frac{\rho_t(x)}{\rho_t^{\star}(x)} \frac{\partial}{\partial t} \rho^{\star}_t(x) dx + \int_\Omega \log \left(\frac{\rho_t(x)}{\rho_t^{\star}(x)}\right) \frac{\partial}{\partial t} \rho_t(x)dx  \\
			& = -\int_\Omega v_t^{\star}(x, \rho_t^{\star}) \cdot \nabla \left(\frac{\rho_t(x)}{\rho_t^{\star}(x)}\right)  \rho^{\star}_t(x) dx + \int_\Omega v_t(x, \rho_t) \cdot \nabla \log \left(\frac{\rho_t(x)}{\rho_t^{\star}(x)}\right) \rho_t(x) dx   \\
			& = -\int_\Omega \left(v_t^{\star}(x, \rho_t^{\star}) - v_t(x, \rho_t)\right) \cdot \left(\nabla \log \rho_t(x)- \nabla \log \rho^{\star}_t(x)\right) \rho_t(x) dx    \\
			& = \int_\Omega \left(s_t^{\star}(x) - s_t(x)\right) \cdot D_t(x) \left(\nabla \log \rho_t(x)- s^{\star}_t(x)\right) \rho_t(x) dx \\
			&+ \int_\Omega \left(
			\int_{\Omega} K(x,y) \rho_t^{\star}(y) dy -\int_{\Omega} K(x,y) \rho_t(y) dy
			\right) \cdot \left(\nabla \log \rho_t(x)- s^{\star}_t(x)\right) \rho_t(x) dx.
		\end{aligned}
	\end{equation}
	Above, we used integration by parts to get the third equality. 
	Next, we have the bound on the mean field interaction term,
	\begin{equation}
		\label{eq:K}
		\begin{aligned}
			& \int_\Omega \left(
			\int_{\Omega} K(x,y) (\rho_t^{\star}(y)-\rho_t(y)) dy 
			\right) \cdot \left(\nabla \log \rho_t(x)- s^{\star}_t(x)\right) \rho_t(x) dx \\
			=&  
			\int_\Omega \left(
			\int_{\Omega} \tilde{K}(x,y)( \rho_t^{\star}(y) \nabla \log \rho_t^{\star}(y) -\rho_t(y) \nabla \log \rho_t(y)) dy 
			\right) 
			\cdot \left(\nabla \log \rho_t(x)- s^{\star}_t(x)\right) \rho_t(x) dx \\
			\le &
			\sup_{x,y \in \Omega } \vert \tilde{K}(x,y) \vert \int_\Omega  \int_\Omega 
			\bigg\lvert ( \frac{\rho^{\star}_t(y)}{\rho_t(y)}\nabla \log \rho_t^{\star}(y)-
			\nabla \log \rho_t(y)) 
			\cdot
			\left(\nabla \log \rho_t(x)- s^{\star}_t(x)\right) \bigg\lvert
			\rho_t(y) dy \rho_t(x)dx \\
			\le &
			\frac12
			\sup_{x,y \in \Omega } \vert \tilde{K}(x,y) \vert  \\
           &
			\left[ \int_\Omega 
			\lVert 
			\frac{\rho^{\star}_t(y)}{\rho_t(y)}\nabla \log \rho_t^{\star}(y)-
			\nabla \log \rho_t(y)
			\lVert^2
			\rho_t(y) dy 
			+ \int_\Omega  \lVert
			\nabla \log \rho_t(x)- s^{\star}_t(x)
			\lVert ^2 \rho_t(x)dx \right]		 \\
			\le &
			\frac12
			\sup_{x,y \in \Omega } \vert \tilde{K}(x,y) \vert
			(1+ C_t^2)
			\int_\Omega  \lVert
			\nabla \log \rho_t(x)- s^{\star}_t(x)
			\lVert ^2 \rho_t(x)dx.
		\end{aligned}
	\end{equation}
	The first equality in equation~\eqref{eq:K} is obtained using integration by parts with
	$ \tilde{K} (x,y) = \int_{\Omega} K(x,y) dy, x ,y\in \Omega.$
 For the second inequality in equation~\eqref{eq:K}, we use
	\begin{equation*}
		\begin{aligned}
			&
			\bigg \lvert ( \frac{\rho^{\star}_t(y)}{\rho_t(y)}\nabla \log \rho_t^{\star}(y)-
			\nabla \log \rho_t(y)) 
			\cdot
			\left(\nabla \log \rho_t(x)- s^{\star}_t(x)\right) \bigg \lvert \\
			\le &
			\frac12
			\lVert 
			\frac{\rho^{\star}_t(y)}{\rho_t(y)}\nabla \log \rho_t^{\star}(y)-
			\nabla \log \rho_t(y)
			\lVert^2
			+ \frac12
			\lVert
			\nabla \log \rho_t(x)- s^{\star}_t(x)
			\lVert ^2 , 
		\end{aligned}
	\end{equation*}
	and then
	\begin{equation*}
		\begin{aligned}
			&
			\int_\Omega  \int_\Omega
			\lVert 
			\frac{\rho^{\star}_t(y)}{\rho_t(y)}\nabla \log \rho_t^{\star}(y)-
			\nabla \log \rho_t(y)
			\lVert^2
			+  \lVert
			\nabla \log \rho_t(x)- s^{\star}_t(x)
			\lVert ^2 \rho_t(y) dy \rho_t(x)dx \\
			=&
			\int_\Omega \lVert 
			\frac{\rho^{\star}_t(y)}{\rho_t(y)}\nabla \log \rho_t^{\star}(y)-
			\nabla \log \rho_t(y)
			\lVert^2 \rho_t(y) dy \int_\Omega \rho_t(x)dx \\
		+ & 
			\int_\Omega \lVert
			\nabla \log \rho_t(x)- s^{\star}_t(x)
			\lVert ^2 \rho_t(x)dx \int_\Omega \rho_t(y) dy \\
			=&
			\int_\Omega \lVert 
			\frac{\rho^{\star}_t(y)}{\rho_t(y)}\nabla \log \rho_t^{\star}(y)-
			\nabla \log \rho_t(y)
			\lVert^2 \rho_t(y) dy
			+
			\int_\Omega \lVert
			\nabla \log \rho_t(x)- s^{\star}_t(x)
			\lVert ^2 \rho_t(x)dx .
		\end{aligned}
	\end{equation*}
	For the third(last) inequality in~\eqref{eq:K}, by definition and the continuity of $\rho^{\star}_t, \nabla \rho^{\star}_t, \rho_t, \nabla \rho_t $ over the domain $\Omega,$ it can be shown that
\begin{equation*}
		C_t :
		= \sup_{y \in \Omega} \dfrac{	\lVert
			\frac{\rho^{\star}_t(y)}{\rho_t(y)}\nabla \log \rho_t^{\star}(y)-
			\nabla \log \rho_t(y)
			\lVert}{\lVert \nabla \log \rho_t(y)- \nabla \log  \rho^{\star}_t(y) \lVert} < \infty,
	\end{equation*}
	then for any $y \in \Omega,$
	we have 
	$$ \dfrac{	\lVert
		\frac{\rho^{\star}_t(y)}{\rho_t(y)}\nabla \log \rho_t^{\star}(y)-
		\nabla \log \rho_t(y)
		\lVert}{\lVert \nabla \log \rho_t(y)- \nabla \log  \rho^{\star}_t(y) \lVert}
	\le C_t,$$
	and
	$$	\lVert
	\frac{\rho^{\star}_t(y)}{\rho_t(y)}\nabla \log \rho_t^{\star}(y)-
	\nabla \log \rho_t(y)
	\lVert
	\le
	C_t \lVert \nabla \log \rho_t(y)- \nabla \log  \rho^{\star}_t(y) \lVert .$$
	Adding \eqref{eq:ent:s1} and \eqref{eq:K}, 
	\begin{equation}
		\begin{aligned}
			&\frac{d}{dt} D_{\mathrm{KL}}(\rho_t \Vert \rho^{\star}_t) = 
			\int_\Omega \left(s_t^{\star}(x) - s_t(x)\right) \cdot D_t(x) \left(\nabla \log \rho_t(x)- s^{\star}_t(x)\right) \rho_t(x) dx \\
			&+ \int_\Omega \left(
			\int_{\Omega} K(x,y) \rho_t^{\star}(y) dy -\int_{\Omega} K(x,y) \rho_t(y) dy
			\right) \cdot \left(\nabla \log \rho_t(x)- s^{\star}_t(x)\right) \rho_t(x) dx   \\
			&\le
			\hat{D}_t
			\int_\Omega \left(s_t^{\star}(x) - s_t(x)\right) \cdot  \left(\nabla \log \rho_t(x)- s^{\star}_t(x)\right) \rho_t(x) dx \\
			&+
			\frac12
			\sup_{x,y \in \Omega } \vert \tilde{K}(x,y) \vert
			(1+ C_t^2)
			\int_\Omega  \lVert
			\nabla \log \rho_t(x)- s^{\star}_t(x)
			\lVert ^2 \rho_t(x)dx \\
			& \le
			\frac12 \max\left\{ \hat{D}_t, \hat{C}_{K,t}
			\right\}
			\int_\Omega \|s_t(x)-\nabla \log \rho_t(x) \|^2 \rho_t(x) dx  ,
		\end{aligned}
	\end{equation}
	with  $\hat{D}_t := \sup_{x \in \Omega} \Vert D_t(x) \Vert < \infty, \Vert D_t(x) \Vert  = \sup_{z: \Vert z \Vert =1} z^\T D_t(x) z , $
	and  $\hat{C}_{K,t} :=\sup_{x,y \in \Omega } \vert \tilde{K}(x,y) \vert (1+ C_t^2).$
	The last inequality is obtained using
	\begin{equation}
		\begin{aligned}
			\| s_t - \nabla \log \rho_t \|^2 
			& = \|s_t -s^{\star}_t+s^{\star}_t - \nabla \log \rho_t \|^2 \\
			& = \| s_t - s^{\star}_t \|^2
			+\|\nabla \log \rho_t  - s^{\star}_t  \|^2
			+ 2 (\nabla \log \rho_t -s^{\star}_t)\cdot (s^{\star}_t - s_t) \\
			& \ge \|\nabla \log \rho_t -s^{\star}_t \|^2 + 2 (\nabla \log \rho_t -s^{\star}_t)\cdot (s^{\star}_t - s_t) .
		\end{aligned}
	\end{equation}
	Finally
	\begin{equation}
		\begin{aligned}
			\frac{d}{dt} D_{\mathrm{KL}}(\rho_t \Vert \rho^{\star}_t)
			\le
			\frac12 \max\left\{ \hat{D}_t, \hat{C}_{K,t}
			\right\}
			\int_\Omega \|s_t(x)-\nabla \log \rho_t(x) \|^2 \rho_t(x) dx    .
		\end{aligned}
	\end{equation}
	Integrating this equation on $t\in[0,T],$ 
	we have for any $T\in [0,\infty)$,
	\begin{equation}
		\begin{aligned}
		    D_{\mathrm{KL}}(\rho_t \Vert \rho^{\star}_t) 
      & \le 
		D_{\mathrm{KL}}(\rho_{t=0} \Vert \rho^{\star}_{t=0})  \\
      & \quad +
		\frac12 \int_0^T 
		\max \left\{ \hat{D}_t, \hat{C}_{K,t}\right\} 
		\int_\Omega \|s_t(x) - \nabla \log \rho_t(x) \|^2 \rho_t(x)dx dt.
		\end{aligned}
	\end{equation}
	With $\rho_{t=0}(x) = \rho^{\star}_{t=0}(x) $ for all $x \in \Omega$,
	\begin{equation}
		D_{\mathrm{KL}}(\rho_T \Vert \rho^{\star}_T) \le 
		\frac12 \int_0^T 
		\max \left\{ \hat{D}_t, \hat{C}_{K,t}\right\} 
		\int_\Omega \|s_t(x) - \nabla \log \rho_t(x) \|^2 \rho_t(x)dx dt.
	\end{equation}
\hfill
\end{proof}


\subsection{Proof: Proposition~\ref{prop:msbtmseq}}
\label{app:proof p1}

\begin{proof}
Given a function $ \phi(x)$ of interest,
we have  
\begin{equation}
    \int_{\Omega} \phi(x) \rho_t(x) dx =\int_{\Omega} \phi(X_{0,t}(x)) \rho_0(x) dx ,
\end{equation}
using the property of the transport map $X_{0,t}(\cdot)$.
Thus,
	\begin{equation}
		\begin{aligned}
			&\int_\Omega \left( \|s_t(X_{0,t}(x))\|^2 +2 \nabla \cdot s_t(X_{0,t}(x)) \right) \rho_0(x) dx \\
			=& \int_\Omega \left( \|s_t(x) \|^2 +2 \nabla \cdot s_t(x) \right) \rho_t(x) dx \\
			=& \int_\Omega \left( \|s_t(x) \|^2 -2  s_t(x)\cdot \nabla \log \rho_t (x)\right) \rho_t(x) dx \\
               =& \int_\Omega \left( \|s_t(X_{0,t}(x)) \|^2 -2  s_t(X_{0,t}(x))\cdot \nabla \log \rho_t (X_{0,t}(x))\right) \rho_0(x) dx.
		\end{aligned}
	\end{equation}
	Then the minimization~\eqref{eq:sbtm3} is equivalent to 
	the following minimization problem
	\begin{equation}
		\min_{s_t}  
		\int_\Omega \|s_t(X_{0,t}(x)) - \nabla \log \rho_t(X_{0,t}(x))\|^2 \rho_0(x) dx,
	\end{equation}
	since  
	\begin{equation}
		\begin{aligned}
			&\int_\Omega  \|s_t(X_{0,t}(x))- \nabla \log \rho_t (X_{0,t}(x))\|^2 \rho_0(x) dx \\
			=& \int_\Omega \left( \|s_t(X_{0,t}(x)) \|^2 -2  s_t(X_{0,t}(x))\cdot \nabla \log \rho_t (X_{0,t}(x)) + \|\nabla \log \rho_t(X_{0,t}(x)) \|^2 \right) \rho_0(x) dx,
		\end{aligned}
	\end{equation}
	and $\|\nabla \log \rho_t(X_{0,t}(x)) \|^2$ can be neglected with respect to $s_t(x)$.
	The minimizer $\hat{s}_t$ of the minimization problem~\eqref{eq:sbtm3} should satisfy $\hat{s}_t(x) = \nabla \log \rho_t (x).$
	Then the equation~\eqref{eq:transport} becomes
	\begin{equation}
		\frac{\partial}{\partial t} \rho_t(x) = -\nabla \cdot \left(b_t(x, \rho_t) \rho_t(x) -D_t(x) \nabla \log \rho_t(x) \right),
	\end{equation}
	which recovers the mean-field Fokker-Planck equation~\eqref{eq:FPE}.
Second, we can obtain the inequality~\eqref{eq:entD} from Lemma~\ref{lemma:entropy}.
\hfill
\end{proof}

\subsection{Proof: Proposition~\ref{prop:error}}
\label{app:proof p2}

\begin{proof}
For convenience, we use $X^{\star}_{t}, X^{N}_{t} $ as $X^{\star}_{t_0,t}, X^{N}_{t_0,t}.$
For $t_0 \leq t \leq t_0 + (N_T -1)\Delta t,$
using Taylor expansion and the smoothness of the velocity $v^{\star},$
\begin{equation}\label{eq:ea-X}
    \begin{aligned}
        & X^{\star}_{t+\Delta t}(x) 
        =X^{\star}_{t}(x) + v_t^{\star} (X^{\star}_t(x), \rho^{\star}(X^{\star}_t(x))) \Delta t  + \frac{d^2 X_{t}^{\star}(x) }{dt^2}\bigg\vert_{t=\tau}(\Delta t )^2\\
        =& X^{\star}_{t}(x)  + \frac{d^2 X_{t}^{\star}(x) }{dt^2}\bigg\vert_{t=\tau}(\Delta t )^2  \\
         +&
        \underbrace{\left[f_t (X^{\star}_{t}(x)) - \int_{\Omega} K(X^{\star}_{t}(x),y) \rho^{\star}_t(y) dy - D_t (X^{\star}_{t}(x)) \nabla \log \rho_t^{\star} (X^{\star}_{t}(x))\right]}_{v^{\star}_t \left(X^{\star}_t(x)  \right)} \Delta t ,
    \end{aligned}
\end{equation}
with $\tau \in \left[t, t+\Delta t\right].$
According to Algorithm~\ref{alg:msbtm},
\begin{equation}\label{eq:ea-Xn}
    \begin{aligned}
       & X^{N}_{t+ \Delta t} (x) = X^{N}_{t}(x) \\
   & +
   \underbrace{\left[  f_t(X^{N }_{t}(x) )
         - \frac1N \sum_{j=1}^N K(X^{N }_{t}(x), X^{N }_{t} (x^{(j)}) )  
        -D_{t }(X^{N }_{t}(x)) s_{t}(X^{N }_{t}(x)) \right] }_{v^{N}_t \left(X^{N}_t(x) \right)}\Delta t ,
    \end{aligned}
\end{equation}
where $\{x^{(i)}\}_{i=1}^N$ is a set of $N$ samples from initial density $\rho_{t_0}$.
Subtracting equation~\ref{eq:ea-X} by equation~\ref{eq:ea-Xn}, we have
\begin{equation}\label{eq:gap}
   \begin{aligned}
 X^{\star}_{t+\Delta t}(x) - & X^{N}_{t+ \Delta t} (x) =X^{\star}_{t }(x) -X^{N}_{t } (x)  
     + \frac{d^2 X_{t}^{\star}(x) }{dt^2}\bigg\vert_{t=\tau}(\Delta t )^2 \\
    + & \nabla f (X_{t,\xi})\cdot (X^{\star}_{t }(x) -X^{N}_{t } (x) )
     \Delta t \\
     -& \left[\int_{\Omega} K(X^{\star}_{t}(x),y) \rho^{\star}_t(y) dy 
     -\frac1N \sum_{j=1}^N K(X^{N }_{t}(x), X^{N }_{t} (x^{(j)}) ) 
    \right] \Delta t \\
    -& \left[D_t (X^{\star}_{t}(x)) \nabla \log \rho_t^{\star} (X^{\star}_{t}(x))
    -D_{t} (X^{N }_{t}(x)) s_{t}(X^{N }_{t}(x))
    \right] \Delta t .
   \end{aligned}
\end{equation}
First, based on the law of large numbers and assumption that the kernel function is twice-differentiable,
\begin{equation}
   \begin{aligned}
       &\mathbb{E}_{x, x^{(j)}} \bigg\Vert \int_{\Omega} K(X^{\star}_{t}(x),y) \rho^{\star}_t(y) dy 
     -\frac1N \sum_{j=1}^N K(X^{N }_{t}(x), X^{N }_{t} (x^{(j)})) \bigg \Vert \\
    =& \mathbb{E}_{x, x^{(j)}} \bigg\Vert  \int_{\Omega} K(X^{\star}_{t}(x),y) \rho^{\star}_t(y) dy 
    - \frac1N \sum_{j=1}^N K(X^{\star }_{t}(x), X^{\star }_{t} (x^{(j)}) ) \\
    +&\frac1N \sum_{j=1}^N K(X^{\star }_{t}(x), X^{\star }_{t} (x^{(j)}) ) -\frac1N \sum_{j=1}^N K(X^{N }_{t}(x), X^{N }_{t} (x^{(j)}) ) \bigg \Vert \\
    \leq &
    \mathbb{E}_{x, x^{(j)}}  \bigg\Vert  \int_{\Omega} K(X^{\star}_{t}(x),y) \rho^{\star}_t(y) dy 
    - \frac1N \sum_{j=1}^N K(X^{\star }_{t}(x), X^{\star }_{t} (x^{(j)}) ) \bigg\Vert  \\
    +& \mathbb{E}_{x, x^{(j)} } \frac1N \sum_{j=1}^N   \bigg\Vert 
    K(X^{\star }_{t}(x), X^{\star }_{t} (x^{(j)}) )
    - K(X^{N }_{t}(x), X^{N }_{t} (x^{(j)}) ) \bigg \Vert  \\
    \leq & C_1 \frac{1}{\sqrt{N}} + C_2 \mathbb{E}_{x} \Vert X^{\star}_t(x) -X_{t}^{N} (x)\Vert,
   \end{aligned}
\end{equation}
where $C_1, C_2>0$ are some constants and $x, x^{(j)}, j=1, \dots, N$ are independent and identically distributed.
Second
\begin{equation}
    \begin{aligned}
    & \Vert D_t (X^{\star}_{t}(x)) \nabla \log \rho_t^{\star} (X^{\star}_{t}(x))
    -D_{t }(X^{N }_{t}(x)) s_{t}(X^{N }_{t}(x)) \Vert \\
    =&
   \Vert D_t (X^{\star}_{t}(x)) \nabla \log \rho_t^{\star} (X^{\star}_{t}(x))
    - D_t (X^{N }_{t}(x)) \nabla \log \rho_t^{\star} (X^{N }_{t}(x)) \\
    +& 
     D_t (X^{N }_{t}(x)) \nabla \log \rho_t^{\star} (X^{N}_{t}(x))
    -D_{t }(X^{N }_{t}(x)) s_{t}(X^{N }_{t}(x))\Vert \\
    \leq &
    C_3 \Vert X^{\star}_t(x) -X_{t}^{N} (x)\Vert
    + C_4 \epsilon,
    \end{aligned}
\end{equation}
where $C_3, C_4>0$ are some constants.
Denote 
$\mathcal{E}_{t} := \mathbb{E}_{x }\left(\Vert X^{\star}_t(x) -X_{t}^{N} (x)\Vert  \right),$ 
then from equation~\eqref{eq:gap}, we have, for some constant $C>0,$
\begin{equation}
    \mathcal{E}_{t+\Delta t} \leq \mathcal{E}_{t}  +  
    C \mathcal{E}_{t}   \Delta t
    +\left[ C_1 \frac{1}{\sqrt{N}} 
    + C_4 \epsilon \right]\Delta t
     + \mathcal{O}((\Delta t )^2) .
\end{equation} 
Therefore,
\begin{equation}
     \begin{aligned}
          \mathcal{E}_{t} 
          & \le e^{C(t-t_0)}\mathcal{E}_{t_0}  + \mathcal{O}(\frac{1}{\sqrt{N}}) (t-t_0)+ \mathcal{O}(\epsilon) (t-t_0) +\mathcal{O}(\Delta t )(t-t_0),
     \end{aligned}
\end{equation}
as $\frac1N, \epsilon, \Delta t \rightarrow 0$.
With the same initial condition that $X_{t_0}^{\star}(x)= X_{t_0}^{N}(x)$ so $\mathcal{E}_{t_0}=0,$
and we obtain equation~\eqref{eq:error-X}.
\end{proof} 

\section{Experimental details}
All numerical experiments were performed in jax, using the dm-haiku package to
implement the networks and the optax package for optimization.

\subsection{Quantitative comparison in section~\ref{sec:ex1}}
\label{app:qc}

We provide the calculation and derivation of the quantities used in the experiments.
In the following, we denote $X_{t_0,t}(x^{(i)})$ as $X_t^{(i)}$ for short.

\paragraph{Mean and covariance}
The empirical mean $\hat{m}_t$ and covariance $\hat{C}_t$  can be directly calculated as
\begin{align}
	\hat{m}_t &= \frac{1}{N}\sum_{i=1}^{N} X_t^{(i)}, \\
	 \hat{C}_t &= 
	\frac{1}{N}\sum_{i=1}^{N}  (X_t^{(i)}- \hat{m}_t)(X_t^{(i)}- \hat{m}_t)^T,
\end{align}
using samples $ \left\{ X_t^{(i)}\right\}_{i=1}^N$.
The analytical mean $m_t$ and covariance $C_t$ 
can be obtained by solving the ODE
\begin{equation}\label{eq:MC}
	\begin{aligned}
		dm_t &= (\beta_t -m_t)dt, \\
		dC_t & = (-2C_t-2\alpha (1-\frac{1}{N}) C_t + 2DI_2)dt .
	\end{aligned}
\end{equation}
The ODE of mean $m_t$ is derived from taking expectation of the dynamics~\eqref{eq:harmonic_SDE}.
And for the covariance $C_t:$
starting from the dynamics
\begin{equation}
	dX^{(i)} _{t}=  (\beta_t - X_t^{(i)}) dt -\alpha \frac{1}{N} \sum_{j=1}^{N} (X^{(i)} _{t} -  X^{(i)} _{t} ) dt 
	+\sqrt{2 D} dW_t, i=1, \dots N,
\end{equation}
we have
\begin{align*}
	& d\left(X^{(i)} _{t} (X^{(i)} _{t})^\T \right) =
	dX^{(i)} _{t}  (X^{(i)} _{t} )^\T +X^{(j)} _{t} (dX^{(i)} _{t} )^\T+2DI_2 dt      \\
	=& \left[\beta_t - X_t^{(i)}- 
	\alpha \frac{1}{N} \sum_{j=1}^{N} (X^{(i)} _{t} -  X^{(i)} _{t} )\right] (X^{(i)} _{t} )^\T dt \\
	+&  X^{(i)} _{t} \left[\beta_t - X_t^{(i)}- 
	\alpha \frac{1}{N} \sum_{j=1}^{N} (X^{(i)} _{t} -  X^{(i)} _{t} )\right]^\T dt + 2DI_2 dt + (2\sqrt{2 D} X^{(i)} _{t}  )dW_t.
\end{align*}
Taking the expectation,
\begin{align*}
	&	d \mathbb{E} \left(X^{(i)} _{t} (X^{(i)} _{t})^\T \right) -2DI_2 dt
	=\mathbb{E}
	\left\{	\left[\beta_t - X_t^{(i)}- 
	\alpha \frac{1}{N} \sum_{j=1}^{N} (X^{(i)} _{t} -  X^{(i)} _{t} )\right] (X^{(i)} _{t} )^\T\right\} dt \\
	& \quad \quad +\mathbb{E} \left\{X^{(i)} _{t} \left[\beta_t - X_t^{(i)}- 
	\alpha \frac{1}{N} \sum_{j=1}^{N} (X^{(i)} _{t} -  X^{(i)} _{t} )\right]^\T \right\} dt  \\
	&= \left\{\beta_t m_t^\T + m_t \beta_t^\T -2\mathbb{E}(X^{(i)} _{t} (X^{(i)} _{t})^\T)  -  	2\alpha \frac{1}{N} \sum_{j=1}^{N}
	\left[\mathbb{E}(X^{(i)} _{t} (X^{(i)} _{t})^\T)  -\mathbb{E}(X^{(i)} _{t} (X^{(j)} _{t})^\T) \right] \right\} dt  \\
	&= \left\{\beta_t m_t^\T + m_t \beta_t^\T -2\mathbb{E}(X^{(i)} _{t} (X^{(i)} _{t})^\T)  -  	2\alpha \frac{1}{N} \sum_{j\neq i}
	\left[\mathbb{E}(X^{(i)} _{t} (X^{(i)} _{t})^\T)  -m_t m_t^\T) \right] \right\} dt  \\ 
	&=  \left\{\beta_t m_t^\T + m_t \beta_t^\T -2\mathbb{E}(X^{(i)} _{t} (X^{(i)} _{t})^\T)  -  	2\alpha \frac{1}{N} \sum_{j\neq i}
	\left[\mathbb{E}(X^{(i)} _{t} (X^{(i)} _{t})^\T)  -m_t m_t^\T) \right] \right\} dt .
\end{align*}
and using $$
m_t = \mathbb{E}  (X^{(i)} _{t}), \quad
C_t = Var(X_t) = \mathbb{E}((X_t-\mathbb{E}(X_t)) (X_t-\mathbb{E}(X_t))^\T) = \mathbb{E} \left(X^{(i)} _{t} (X^{(i)} _{t})^\T \right) -m_t m_t^\T,  $$
we have
\begin{align*}
	dC_t &= d \mathbb{E} \left(X^{(i)} _{t} (X^{(i)} _{t})^\T \right) -dm_t m_t^\T - m_t (dm_t)^\T   \\
	&=  d \mathbb{E} \left(X^{(i)} _{t} (X^{(i)} _{t})^\T \right)-(\beta_t -m_t)m_t^\T  dt - m_t(\beta_t -m_t)^\T  dt\\
	&=(-2 C_t - 2\alpha \frac{N-1}{N} C_t + 2D I_2)dt.
\end{align*}

\paragraph{Relative Fisher divergence}

The solution is Gaussian for all $t:$ 
denote the analytical solution
$\rho_t = N(m_t, C_t),$
then the corresponding score is $-\nabla \log \rho_t(x) = C_t^{-1} (x-m_t).$
We use the prediction of the network $s_t(\cdot)$ on training data or SDE data to calculate the relative Fisher divergence
\begin{equation}
	\frac{\int_\Omega |s_t(x) - \nabla\log\rho_t(x)|^2 \bar{\rho}(x) dx}{\int_\Omega|\nabla\log\rho_t(x)|^2 \bar{\rho}(x)dx}
	\approx 
	\frac{\sum_{i=1}^{N} |s_t(X^{(i)}_t) - \nabla\log\rho_t(X^{(i)}_t)|^2 }{\sum_{i=1}^{N} |\nabla\log\rho_t(X^{(i)}_t)|^2 },
\end{equation} 
where $\left\{X^{(i)}_t\right\}$ are training data or SDE data at $t,$ obtained from the algorithm or SDE integration.

\paragraph{Analytical entropy and entropy production rate} 
Entropy production rate $\frac{dE_t}{dt}$ can be
numerically computed over the training data,
\begin{align*}
	\frac{d}{dt}E_t
	&=- \int_{\Omega} \log\rho_t(x) \frac{\partial}{\partial t} \rho_t (x) dx , \quad
	\text{ with } \frac{\partial}{\partial t} \rho_t (x)  = -\nabla \cdot (v_t(x) \rho_t (x)),
	\\
	&=\int_{\Omega} \log\rho_t(x) \nabla \cdot (v_t(x) \rho_t (x)) dx
	= -\int_{\Omega} \nabla \log\rho_t(x) \cdot v_t(x) \rho_t (x) dx \\
	&=  -\int_{\Omega}s_t(x)\cdot v_t(x) \rho_t(x)dx 
	\approx -\frac{1}{N} \sum_{i=1}^{N} s_t(X_t^{(i)}) \cdot v_t(X_t^{(i)}),
\end{align*}
with 
\begin{align*}
	v_t(x) &=  \beta_t - x  -  \alpha \int_{\Omega} (x -y) \rho_t(y) dy - D \nabla \log \rho_t(x)  \\
	&\approx
	\beta_t - x
	- 	\frac{\alpha}{N} \sum_{j=1}^N 
	\left(x - X^{(j)}_t \right)
	- D s_t(x).
\end{align*}
Analytically,
the differential entropy $E_t$ is  
\begin{equation}
	E_t =-\int_{\mathbb{R}^{\bar{d}}} \log\rho_t(x) \rho_t(x) dx = \frac{1}{2} \bar d \left(\log\left(2\pi\right) + 1\right) + \frac{1}{2}\log\det C_t,
\end{equation} 
and then entropy production rate is
\begin{align*}
	\frac{d}{dt} E_t 
	&= \frac{1}{2}	\frac{d}{dt} \log\det C_t
	=\frac{1}{2}\Tr\left((\frac{d  \log\det C_t }{dC_t})^\T \frac{d}{dt} C_t\right) \\
	&=\frac{1}{2}\Tr\left( C_t^{-1}  \frac{d}{dt} C_t\right)
	=\frac{1}{2}\Tr\left( (-2-2\alpha \frac{N-1}{N})I_2 + 2D C_t^{-1}\right)	 \\
	&= -2-2\alpha \frac{N-1}{N} + D\Tr(C_t^{-1}) .
\end{align*}


\end{document}